\newcommand\cyr{%
 \renewcommand\rmdefault{wncyr}%
 \renewcommand\sfdefault{wncyss}%
 \renewcommand\encodingdefault{OT2}%
\normalfont\selectfont} \DeclareTextFontCommand{\textcyr}{\cyr}
\newtheorem{theorem}{Theorem}
\newtheorem{lemma}[theorem]{Lemma}
\newtheorem{proposition}[theorem]{Proposition}
\newtheorem{problem}[theorem]{Problem}
\def\O{\mathcal O
}
\begin{document}

\title{Orthonormal Systems in Linear Spans}

\author{Allison Lewko\thanks{Supported by a Microsoft Research PhD Fellowship.} \and Mark Lewko}

\date{}
\maketitle
\begin{abstract}We show that any $N$-dimensional linear subspace of $L^2(\mathbb{T})$ admits an orthonormal system such that the $L^2$ norm of the square variation operator $V^2$ is as small as possible. When applied to the span of the trigonometric system, we obtain an orthonormal system of trigonometric polynomials with a $V^2$ operator that is considerably smaller than the associated operator for the trigonometric system itself.
\end{abstract}

\section{Introduction}

Let $\mathbb{T}$ denote a probability space and $\Phi := \{\phi_n(x)\}_{n=1}^{N}$ an orthonormal system (ONS) of functions from $\mathbb{T}$ to $\mathbb{R}$. One is often interested, usually motivated by questions regarding almost everywhere convergence, in the behavior of the maximal function

\[\mathcal{M}f := \max_{\ell \leq N}\left| \sum_{n=1}^{\ell} a_n \phi_n  \right|. \]

For an arbitrary ONS, the Rademacher-Menshov theorem states that $||\mathcal{M}f ||_{L^2} \ll \log(N) ||f||_{L^2}$, where the $\log(N)$ factor is known to be sharp. One however can do much better for many classical systems, for instance one can replace $\log(N)$ with an absolute constant in the case of the trigonometric system (the Carleson-Hunt inequality).  More recently, there has been interest in variational refinements of these maximal results. Define the $r$-th variation operator
\[  \mathcal{V}^{r}f  := \left(\max_{\pi \in \mathcal{P}_{N}} \sum_{I \in \pi } \left| \sum_{n\in I} a_n\phi_n \right|^r \right)^{1/r} \]
where $\mathcal{P}_N$ denotes the set of partitions of $[N]$ into subintervals.  Clearly, $|\mathcal{M}f | \leq |\mathcal{V}^{r}f |$ for all $r< \infty$. In the case of trigonometric system it has been shown that $||\mathcal{V}^{r}f||_{2} \ll ||f||_{2} $ for $r>2$ (see \cite{OSTTW}), and  $||\mathcal{V}^{2}f||_{2} \ll \sqrt{\log(N)}||f||_{2} $ (see \cite{LewkoJFA}), where the factor of $\sqrt{\log(N)}$ is optimal. This later inequality has some applications to sieve theory \cite{LewkoJNT}. The factor of $\sqrt{\log(n)}$ is rather unfortunate, leading to inefficiencies in these applications. It is likely that this factor can be improved for the functions arising in the applications, for instance, if the Fourier support of $f$ is contained in certain arithmetic sets. This is a potential route towards improving the estimates in \cite{LewkoJNT}. Some results in this direction can be found in section 7 of \cite{LewkoJFA}.

In a different direction, it seems that the $\sqrt{\log(n)}$ factor might also be an eccentricity of the standard ordering of the trigonometric system. In \cite{LewkoJFA} the following problem was posed:

\begin{problem}\label{prob:main}Is there a permutation $\sigma : [N] \rightarrow [N]$ such that the reordering of the trigonometric system $\Phi:= \{ \phi_{n} = e(\sigma(n)x)\}$ (where $e(x):=e^{2 \pi i x})$ satisfies
\[ ||\mathcal{V}^{2}f||_{2} \ll o(\sqrt{\log(N)}) ||f||_{2} \]
for all $f$ in the span of the system?
\end{problem}

This problem can be thought of as a variational variant of Garsia's conjecture. We refer the reader to \cite{Bour} and \cite{LewkoJFA} for discussion of these and related problems. In support of an affirmative answer, it was proved in \cite{LewkoJFA} that given a function $f=\sum_{n=1}^{N} a_n e(nx)$, there exists a permutation $\sigma : [N] \rightarrow [N]$ such that reordered trigonometric system satisfies $||\mathcal{V}^{2}f||_{2} \ll  \sqrt{\log \log(N)} ||f||_{2} $. There the permutation is allowed to depend on the function, while the above problem seeks a permutation that works for all functions simultaneously.

In this paper, we will study the following related problem. Given an ONS $\Phi := \{\phi_n(x)\}_{n=1}^{N}$ and a $N \times N$ orthogonal matrix $O = \{o_{i,n}\}_{1\leq i,n \leq N} $, we define a new ONS, $\Psi := \{\psi_n(x)\}_{n=1}^{N}$, by
\[ \psi_n(x) := \sum_{i=1}^{N} o_{i,n} \phi_i(x). \]
This new system will span the same space as the original system. Conversely, every such ONS can be obtained from some element of the orthogonal group, $\mathcal{O}(N)$. Let us write $\Phi(O):= \Psi$. Furthermore, in what follows $Q$ will denote a measurable subset of $\mathcal{O}(N)$ and $\mathbb{P}[Q]$ will denote the Haar measure of $Q$.

\begin{theorem}\label{thm:main}Given an ONS $\Phi := \{\phi_n(x)\}_{n=1}^{N}$ from $\mathbb{T}$ to $\mathbb{R}$, there exists an alternate ONS $\Phi(O)$ that spans the same space, and satisfies
\begin{equation}
\label{eq:mainvar}||\mathcal{V}^{2}f||_{2} \ll \sqrt{\log \log(N)} ||f||_{2}
\end{equation}
for all $f$ in the span. In fact, the conclusion holds for all $O \in Q$ for some $Q \subset \mathcal{O}(N)$ with $\mathbb{P}[Q] \geq 1 - Ce^{-cN^{2/5}}$ (for some absolute positive constants $C,c$).
\end{theorem}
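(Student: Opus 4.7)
My strategy is probabilistic: I would draw $O$ from the Haar measure on $\mathcal O(N)$ and show that \eqref{eq:mainvar} holds simultaneously for every $f$ in the span outside an exceptional event of measure $\leq Ce^{-cN^{2/5}}$. The argument splits naturally into three steps: a dyadic reduction of $\mathcal V^2$, a pointwise tail bound for fixed $f$ via concentration of measure on $\mathcal O(N)$, and an $\varepsilon$-net argument to upgrade the pointwise bound to a uniform one.

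First, by a Rademacher--Menshov type dyadic decomposition, any interval $I\subseteq [N]$ is a symmetric difference of $O(\log N)$ dyadic subintervals $D_{j,k} = [k2^j,(k+1)2^j]\cap[N]$. Summing over a partition and applying Cauchy--Schwarz yields an estimate of the shape
\[
\|\mathcal V^2 f\|_2^2 \;\lesssim\; \log N \sum_{j=0}^{\log_2 N} \sum_k \|S_{D_{j,k}} f\|_2^2, \qquad S_D f := \sum_{n \in D} a_n \psi_n,
\]
which for an arbitrary ONS recovers the crude deterministic bound $\|\mathcal V^2 f\|_2 \lesssim \log N\,\|f\|_2$. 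The plan is to sharpen the $\log N$ prefactor to $\log\log N$ for Haar-typical $O$ by exploiting the concentration of the dyadic block norms $\|S_{D_{j,k}}f\|_2$.

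For fixed $f = \sum_n a_n \psi_n$ with $\|f\|_2 = 1$, writing $\psi_n = \sum_i o_{i,n}\phi_i$ gives $\|S_{D_{j,k}} f\|_2^2 = \sum_i \bigl(\sum_{n\in D_{j,k}} a_n o_{i,n}\bigr)^2$. When $O$ is Haar-distributed, the columns of $O$ are approximate independent Gaussians of variance $1/N$, so each block norm is an approximate $\chi^2$-type variable tightly concentrated around $\sum_{n\in D_{j,k}} a_n^2$. Combining Gromov--Milman concentration on $\mathcal O(N)$ with a chaining argument across the $\log_2 N$ dyadic scales, I would derive a pointwise tail estimate of the form
\[
\mathbb P\!\left[\|\mathcal V^2 f\|_2 > C \sqrt{\log\log N}\, \|f\|_2\right] \;\leq\; \exp(-c N^{\alpha})
\]
for some $\alpha > 2/5$. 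The improvement from $\sqrt{\log N}$ to $\sqrt{\log\log N}$ is of iterated-logarithm type: the maximum of $\log N$ approximately sub-Gaussian scale contributions exceeds its typical value only by a factor $\sqrt{\log\log N}$.

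To convert the pointwise tail into the uniform statement over all $f$ in the unit sphere of the span, I would take an $\varepsilon$-net of $S^{N-1} \subset \mathbb R^N$ (of cardinality $(C/\varepsilon)^N$) and use the deterministic bound $\|\mathcal V^2(f-g)\|_2 \lesssim \log N\, \|f-g\|_2$ to control the approximation error to the nearest net point. Balancing the net cardinality against the pointwise tail then yields the stated exceptional measure for a suitable $\varepsilon$. I expect the main obstacle to be the chaining step in the pointwise tail bound: because the $S_{D_{j,k}} f$ across scales $j$ are all linear functionals of the \emph{same} matrix $O$, scale contributions are correlated and a naive union bound across scales recovers only the classical $\sqrt{\log N}$. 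Extracting the $\sqrt{\log\log N}$ gain requires a Dudley-type entropy integral that uses the per-scale sub-Gaussian tails while carefully accounting for the inter-scale Haar correlations.
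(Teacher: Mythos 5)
Your proposal has a fundamental gap in the concentration step. You write that for a Haar-distributed $O$, each block norm $\|S_{D_{j,k}} f\|_2^2 = \sum_i \bigl(\sum_{n\in D_{j,k}} a_n o_{i,n}\bigr)^2$ is ``an approximate $\chi^2$-type variable tightly concentrated around $\sum_{n\in D_{j,k}} a_n^2$.'' In fact this quantity \emph{equals} $\sum_{n\in D_{j,k}} a_n^2$ exactly for every orthogonal $O$: the columns of $O$ are orthonormal, so $\sum_i o_{i,n}o_{i,m} = \delta_{nm}$, and since $\{\psi_n\}$ is then an ONS, the $L^2$ norm of any partial sum is the $\ell^2$ norm of its coefficients. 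There is nothing random here and hence no gain to be extracted from concentration of block $L^2$ norms. Consequently your dyadic bound $\|\mathcal V^2 f\|_2^2 \lesssim \log N \sum_{j,k} \|S_{D_{j,k}} f\|_2^2$ always evaluates to $(\log N)^2 \|f\|_2^2$ and cannot be sharpened by choosing $O$ well; this part of the plan is simply the deterministic Rademacher--Menshov estimate, which is where it stalls.

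The improvement must come not from the $L^2$ norms of blocks but from the \emph{pointwise} distribution of $S_J f(x)$ as a function of $x\in\mathbb T$. The paper's argument uses the randomness of $O$ precisely to show (Propositions \ref{prop:main} and \ref{prop:decomp}) that for a typical basis $\Phi(O)$, any $f = \sum a_n\psi_n$ admits a decomposition $f = G + E$ where $G$ is sub-Gaussian, $\|G\|_{\mathcal G} \ll \|f\|_2$, and the error $E$ has $\|E\|_2 \ll (m/N)^{c}\|f\|_2$ with $m$ the Fourier support of $f$. This is obtained from a Chevet--Marcus--Pisier-type bound for $\mathbb E_{\mathcal O(N)}\|T_O\|$ between suitably chosen Banach spaces (Lemma \ref{lem:OrthDecomp}), not from an $\varepsilon$-net over coefficient vectors. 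The decomposition is then fed into a mass-adapted (not dyadic-in-index) interval hierarchy so that the Gaussian pieces behave as in the law of the iterated logarithm, yielding the $\sqrt{\log\log N}$ factor, while the error pieces are summable because short intervals have small support relative to $N$. Your outline of an $\varepsilon$-net plus a pointwise tail in $O$ also runs into the problem that the conclusion you want is a statement about the joint distribution in $x$ of infinitely many partial sums; a tail bound on the scalar $\|\mathcal V^2 f\|_2$ for fixed $f$ would have to be proved by some other means first, and that other means is essentially the whole content of the theorem.
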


If we take $\Phi:=\{e(nx) \}_{n=1}^{N}$, then this produces an ONS of trigonometric polynomials (spanning the same space as the trigonometric system) with much smaller square variation than the trigonometric system. Strictly speaking, Theorem \ref{thm:main} is stated for real valued ONS, but the result for the trigonometric system can be obtained by splitting into real and imaginary parts and noting the corresponding result holds on each with large probability.  We note that Problem \ref{prob:main} asks for a similar conclusion where $O$ is restricted to be a permutation matrix instead of just an orthogonal matrix.

Theorem \ref{thm:main} is sharp. Consider an ONS of independent, mean zero, variance one Gaussians, $\{g_{i}\}_{i=1}^N$. Notice that applying an orthogonal transformation to this system leaves it metrically unchanged. On the other hand, we have that $\max_{\pi \in \mathcal{P}_{N}} \sum_{I \in \pi } \left| \sum_{n\in I} g_n \right|^2 \sim 2 N \log\log(N) $ (almost surely) from the variational law of the iterated logarithm \cite{LewkoProb}.

Let us briefly outline the key idea in the proof of Theorem \ref{thm:main}. In \cite{LewkoJFA}, we proved an estimate of the form (\ref{eq:mainvar}) for systems of bounded independent random variables (see Theorem 9).  The key ingredient in that case is that for every $f$ in the span of the system we have the sub-gaussian tail estimate $||f||_{\mathcal{G}} \ll ||f||_{2}$ (where $||\cdot ||_{\mathcal{G}}$ is the Orlicz space norm associated to $e^{x^2}-1$). This clearly cannot hold in the setting of Theorem \ref{thm:main}, since any $L^2$ function can be in the span of the system. However, we will show that a function $f$ in the span of a generic basis $\Phi(O)$ can be split $f=G+E$, where $G$ satisfies a sub-Gaussian tail inequality and $E$ has small $L^2$ norm (decreasing with the size of the Fourier support of $f$). More precisely, we will prove (note that we abuse the notation $c$ below to denote multiple distinct constants):

\begin{proposition}\label{prop:decomp0} For $N$  fixed, let $\Phi=\{\phi_n(x)\}_{n=1}^N$ be an ONS such that $\sum_{n=1}^{N}|\phi_n(x)|^2 \leq N$ holds (pointwise). There exists $Q \subset \mathcal{O}(N)$ with $\mathbb{P}[Q] \geq 1 - Ce^{-cN^{2/5}}$ such that for $O\in Q$, we have that the associated ONS $\Phi(O)=\{ \psi_{n}\}_{n=1}^{N}$ satisfies the following property. For any $f=\sum a_n \psi_n$, letting $m$ denote support$(\{a_n\})$ (the number of nonzero $a_i$ values), we have that the function defined by
\[ f :=  \sum a_n \psi_n(x)   \]
can be decomposed as $f := G + E$ where $|| G ||_{\mathcal{G}} \ll ||f||_{2}$ and $|| E ||_{2} \ll \left(\frac{m}{N}\right)^{c}||f||_{2} $ for some universal constant $c>0$.
\end{proposition}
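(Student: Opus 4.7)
The plan is to work in the original basis $\{\phi_i\}$: writing $b := Oa$, we have $f = \sum_i b_i \phi_i$, and for Haar $O$ acting on a unit vector $a$ the vector $b$ is uniform on $S^{N-1}$. Setting $v(x) := (\phi_i(x))_{i=1}^N$ (so that $\|v(x)\|_2^2 \le N$ by hypothesis and $\int \|v\|_2^2 = N$), a direct computation on the sphere gives
\[
\mathbb{E}_O \|f\|_{2p}^{2p} \;=\; \int \mathbb{E}_O \langle Oa, v(x)\rangle^{2p}\, dx \;\ll\; (Cp)^p\, \|f\|_2^{2p},
\]
using the sub-Gaussianity of a single sphere coordinate (variance $\|v(x)\|^2/N$) and the telescoping bound $\int \|v(x)\|^{2p} dx \le N^{p-1}\int\|v\|^2 dx = N^p$. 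So $f$ is already sub-Gaussian in expectation over the Haar measure.

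To realize the decomposition, I would threshold $b$ in the $\phi$-basis at a scale $T$ of order $\|f\|_2\sqrt{\log(N/m)/N}$ and set
\[
G := \sum_i b_i\, \mathbf{1}_{|b_i|\le T}\, \phi_i, \qquad E := \sum_i b_i\, \mathbf{1}_{|b_i|>T}\, \phi_i.
\]
Orthonormality of $\{\phi_i\}$ gives $\|E\|_2^2 = \sum_i b_i^2\, \mathbf{1}_{|b_i|>T}$, which, via the Gaussian-like tail of single sphere coordinates (whose expectation is of order $T\sqrt{N}\, e^{-T^2N/2}$ per coordinate), yields $\mathbb{E}_O\|E\|_2^2 \ll (m/N)^{2c}\|f\|_2^2$ for a universal $c>0$ after absorbing a logarithmic factor. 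For $G(x) = \langle b^{(T)}, v(x)\rangle$, combining $\|b^{(T)}\|_2 \le \|f\|_2$ with the pointwise sub-Gaussianity of $\langle Oa, v(x)\rangle$ in $O$ and the uniform bound $\|v(x)\|^2\le N$ yields $\mathbb{E}_O\int \exp\!\bigl(G^2/(C\|f\|_2)^2\bigr)\, dx \ll 1$, i.e.\ $\mathbb{E}_O\|G\|_{\mathcal{G}} \ll \|f\|_2$.

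The final step is to upgrade these in-expectation bounds to a single $O\in Q$ that works uniformly over all $f$ in the span. I would stratify over the sparsity level $m$, cover the $m$-sparse unit vectors by an $\epsilon$-net of cardinality $\exp(O(m\log(N/m)))$, bound the Hilbert--Schmidt Lipschitz constants of $O\mapsto \|E_a\|_2$ and $O\mapsto \|G_a\|_{\mathcal{G}}$ (both controlled by $\|v\|_\infty \le \sqrt{N}$), and apply Gromov--Milman concentration on $\mathcal{O}(N)$ together with higher-moment Markov estimates to control the nonlinear Orlicz functional. Balancing the net cardinality against the concentration exponent after optimization produces the stated $1 - Ce^{-cN^{2/5}}$ probability. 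For $m$ comparable to $N$ the claim is trivially satisfied by taking $G=0$ and $E=f$.

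The principal obstacle is the uniform-in-$a$ sub-Gaussian control of $G$: since $\|G_a\|_{\mathcal{G}}$ depends on $O$ through exponential moments rather than polynomial ones, naive Gaussian concentration on $\mathcal{O}(N)$ only yields additive deviations. The exponent $N^{2/5}$ is precisely what emerges from balancing the $\epsilon$-net cardinality $\exp(m\log(N/m))$, a moment-based tail bound for the Orlicz norm, and the Haar concentration scale $e^{-ct^2}$ associated with Lipschitz constant $\sim\sqrt{N}$.
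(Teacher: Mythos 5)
Your decomposition is in the wrong space, and this is a genuine gap rather than a technical detail. You threshold the coefficients $b = Oa$ at scale $T$ and set $G := \sum_{|b_i|\le T} b_i\phi_i$. For a generic $O$ and generic sparse $a$, essentially all coordinates satisfy $|b_i| \lesssim \|f\|_2 N^{-1/2} < T$, so $G$ is nearly all of $f$. Your claim $\|G\|_{\mathcal G} \ll \|f\|_2$ would then amount to the assertion that $\|f\|_{\mathcal G} \ll \|f\|_2$ holds uniformly over sparse $a$ for a fixed $O$, which is exactly the statement the paper takes pains to avoid (indeed, the paper singles it out as false in full generality, and even for small $m$ it never proves or uses it). There is no mechanism by which removing the few anomalously large Fourier coefficients repairs the tail behaviour of $f$ \emph{as a function of $x$}: the large values of $f$ have nothing to do with the large coordinates of $b$. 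You also invoke ``pointwise sub-Gaussianity of $\langle Oa, v(x)\rangle$ in $O$'' to control $\mathbb{E}_O\int e^{G^2/(C\|f\|_2)^2}$, but $G(x) = \langle b^{(T)}, v(x)\rangle$ is not a linear function of $O$ — the indicators $\mathbf{1}_{|b_i|\le T}$ depend nonlinearly on $O$ — so that sub-Gaussianity argument does not carry over.

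What the paper does instead is introduce the truncated Orlicz function $\Gamma_K$, which behaves like $e^{t^2}-1$ up to level $K$ and is merely quadratic beyond; it then proves (Proposition~\ref{prop:main}, via the Benyamini--Gordon/Chevet/Marcus--Pisier operator-norm lemma, Dudley's entropy bound for the supremum over $\mathbb{S}_m$, and concentration of measure on $\mathcal{O}(N)$) that $\|f\|_{\Gamma_K} \ll \|f\|_2$ uniformly over $m$-sparse $a$, with $K\asymp\sqrt{\log(N/m)}$. The decomposition is then taken in \emph{physical} space via Lemma~\ref{lem:Gamma2}: set $\gamma := 2\|f\|_{\Gamma_K}$ and split $f = f\cdot\mathbb{I}_{|f/\gamma|\le K} + f\cdot\mathbb{I}_{|f/\gamma| > K}$. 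The first piece is automatically bounded by $K\gamma$ pointwise and sub-Gaussian; the second piece is small in $L^2$ because $\|f\|_{\Gamma_K}$ controls the measure of the bad set $\{|f/\gamma|>K\}$ and the quadratic part of $\Gamma_K$ controls the $L^2$ mass there. That is the step your argument is missing: without an intermediate norm like $\Gamma_K$ that measures the degree of sub-Gaussianity of $f$ \emph{as a function}, a coefficient-space truncation gives no grip on the Orlicz norm of $G$. Your epsilon-net/concentration sketch for uniformity over $a$ and your bound on $\|E\|_2$ via sphere-coordinate tails are both reasonable in spirit, but they are downstream of a decomposition that, as stated, does not produce a sub-Gaussian $G$.
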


See Proposition \ref{prop:decomp} below, which gives a stronger maximal form of this statement. The condition $\sum_{n=1}^{N}|\phi_n(x)|^2 \leq N$ can usually be removed in applications (such as Theorem 1) by a change of measure argument (see Lemma \ref{chgMeas}).  It seems likely that this decomposition may have other applications.

\section{Preliminaries}
We need to define several different norms on the space of functions from $\mathbb{T}$ to $\mathbb{R}$. First, for a positive constant $c$, let $|| \cdot ||_{\mathcal{G}(c)}$ denote the norm of the Orlicz space associated to the convex function $e^{cx^2}-1$. That is, \[||f||_{\mathcal{G}(c)} := \inf_{\lambda \in \mathbb{R}^{+}} \left\{ \int e^{c|f/\lambda|^2}-1 \leq 1 \right\}.\] When we write $||\cdot ||_{\mathcal{G}}$ with the specification of $c$ omitted, we mean $c = 1$.

We next define the convex function
\[\Gamma_{K}(t):= \left\{
                   \begin{array}{ll}
                     e^{t^2}-1, & \hbox{$|t|\leq K$} \\
                     e^{K^2}t^2 + e^{K^2}(1-K^2) - 1, & \hbox{$|t| \geq K$}
                   \end{array}
                 \right.
\]
and denote the associated Orlitz norm $||\cdot ||_{\Gamma_K}$.  We then have

\begin{lemma}\label{lem:Gamma1} When $K \geq 1$, for all $t$ we have that
\[\Gamma_K(t) \leq e^{t^2} -1\]
\[\Gamma_K(t) \leq e^{K^2}t^2.\]
It follows that for $f: \mathbb{T}\rightarrow \mathbb{R}$ we have $||f||_{\Gamma_K} \leq  ||f||_{\mathcal{G}}$ and $||f||_{\Gamma_K} \leq e^{K^2/2}||f||_{L^2}$.
\end{lemma}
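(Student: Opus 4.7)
The plan is to verify both pointwise inequalities by splitting into the two cases defining $\Gamma_K$ and then deduce the two norm inequalities from the definition of the Orlicz norm.

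For the first inequality $\Gamma_K(t) \leq e^{t^2} - 1$, the case $|t| \leq K$ holds with equality by definition. For $|t| \geq K$, writing $s = t^2 \geq K^2$, I reduce the claim to $h(s) := e^s - e^{K^2}(s + 1 - K^2) \geq 0$ on $[K^2, \infty)$. One checks $h(K^2) = 0$ and $h'(s) = e^s - e^{K^2} \geq 0$ on this range, so $h \geq 0$. This gives the first inequality for all $t$.

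For the second inequality $\Gamma_K(t) \leq e^{K^2} t^2$, on $|t| \geq K$ it reduces to $e^{K^2}(1 - K^2) - 1 \leq 0$, which is immediate from $K \geq 1$ since the left side is non-positive. On $|t| \leq K$, again setting $s = t^2 \in [0, K^2]$, I need $e^s - 1 \leq e^{K^2} s$. Setting $\phi(s) := e^{K^2} s - (e^s - 1)$, one has $\phi(0) = 0$ and $\phi'(s) = e^{K^2} - e^s \geq 0$ on $[0, K^2]$, so $\phi \geq 0$ there, proving the claim.

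Finally, both norm statements follow from monotonicity of the Orlicz gauge. For any $\lambda > 0$ with $\int (e^{|f/\lambda|^2} - 1) \leq 1$, the first pointwise inequality applied to $|f|/\lambda$ yields $\int \Gamma_K(|f|/\lambda) \leq 1$, so taking the infimum gives $\|f\|_{\Gamma_K} \leq \|f\|_{\mathcal{G}}$. Similarly, the second pointwise inequality gives $\int \Gamma_K(|f|/\lambda) \leq e^{K^2}\|f\|_{L^2}^2/\lambda^2$, which is at most $1$ as soon as $\lambda \geq e^{K^2/2}\|f\|_{L^2}$, yielding $\|f\|_{\Gamma_K} \leq e^{K^2/2}\|f\|_{L^2}$.

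There is no real obstacle here: the lemma is a bookkeeping step designed precisely to interpolate between the sub-Gaussian norm $\|\cdot\|_{\mathcal{G}}$ and the $L^2$ norm via the truncated function $\Gamma_K$. The only subtlety is confirming that the affine extension defining $\Gamma_K$ on $|t| \geq K$ was chosen so that both inequalities hold simultaneously, which the derivative checks above verify; the hypothesis $K \geq 1$ is used exactly to make $1 - K^2 \leq 0$ in the boundary computation for the second inequality.
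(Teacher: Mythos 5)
Your proof is correct and follows the same high-level structure as the paper's: verify both pointwise inequalities case-by-case on $|t|\le K$ and $|t|\ge K$, then pass to the Orlicz norm bounds via monotonicity of the gauge. The only difference is in the elementary calculus — the paper derives the $|t|\ge K$ case of the first bound from $1+x\le e^x$ applied to $t^2-K^2+1$, and the $|t|\le K$ case of the second from monotonicity of $(e^{t^2}-1)/t^2$, whereas you substitute $s=t^2$ and check nonnegativity of the difference functions $h$ and $\phi$ by a derivative sign; the two are interchangeable and equally valid.
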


\begin{proof} We first prove $\Gamma_K(t) \leq e^{t^2}-1$ for all $t$. For $t$ such that $|t| \leq K$, this is clear since $\Gamma_K(t) = e^{t^2}-1$. We consider $t$ such that $|t| \geq K$. Then $\Gamma_K(t)  = e^{K^2}t^2 + e^{K^2} (1-K^2)-1$, so we must show that $e^{K^2}t^2 + e^{K^2} (1-K^2) \leq e^{t^2}$. We note that for all real $x \geq 0$, $1+x \leq e^x$. Applying this to the quantity $t^2 -K^2 +1 >0$, we have:
\[ e^{K^2}t^2 + e^{K^2} (1-K^2) = e^{K^2} (t^2 - K^2 +1) \leq e^{K^2} e^{t^2 - K^2} = e^{t^2},\]
as required.

We let $f$ be a function from $\mathbb{T}$ to $\mathbb{R}$.
For any fixed positive real number $\lambda$ such that $\int e^{|f/\lambda|^2} -1 \leq 1$ (i.e. $\lambda \geq ||f||_{\mathcal{G}}$), we have
\[ \int \Gamma_K(f/\lambda)\leq \int e^{|f/\lambda|^2}-1 \leq 1,\]
since $\Gamma_K(t) \leq e^{t^2}-1$ for all $t$. This shows that $\lambda \geq ||f||_{\Gamma_K}$, hence $||f||_{\Gamma_K} \leq  ||f||_{\mathcal{G}}$.

We next prove $\Gamma_K(t) \leq e^{K^2} t^2$. We first consider $t$ such that $|t| \geq K$. In this case, $\Gamma_K(t) = e^{K^2} t^2 + e^{K^2}(1-K^2) -1$. Since $K\geq 1$, we see that $e^{K^2} (1-K^2) < 0$, so $\Gamma_K(t) \leq e^{K^2} t^2$ follows. For $t$ such that $|t| \leq K$, we have $\Gamma_K(t) = e^{t^2}-1$, so we must show that $e^{t^2}-1 \leq e^{K^2} t^2$ for $|t| \leq K$.

We consider $\frac{e^{t^2}-1}{t^2}$ as a function of $t$ for $t \geq 0$. Its derivative is:
\[ 2\left( t^{-1} e^{t^2} - t^{-3} e^{t^2} + t^{-3}\right).\]
We observe that this is always non-negative. To see this, consider multiplying the quantity by $t^3$ to obtain $2(t^2 e^{t^2}  - e^{t^2} + 1)$. Non-negativity then follows from the inequality $1+ xe^x \geq e^x$ for all real $x \geq 0$. (This inequality can be proved by noting that $xe^x \geq \int_{0}^x e^u du$.) Hence $\frac{e^{t^2}-1}{t^2}$ is a non-decreasing function of $t$ in the range $0 \leq t \leq K$, so it suffices to consider the value at $t=K$, which is $K^{-2} (e^{K^2}-1)$. Since $K\geq 1$, this is $< e^{K^2}$, as required.

For $f: \mathbb{T}\rightarrow \mathbb{R}$, we consider $\lambda := e^{K^2/2} ||f||_{L^2}$. Then
\[ \int \Gamma_K(f/\lambda) \leq \int e^{K^2} \frac{f^2}{\lambda^2}  = \frac{e^{K^2}}{\lambda^2} ||f||_{L^2}^2 =1,\]
since $\Gamma_K(t) \leq e^{K^2} t^2$. Thus, $||f||_{\Gamma_K} \leq e^{K^2/2}||f||_{L^2}$.
\end{proof}

\begin{lemma}\label{lem:Gamma2} For any (measurable) $f : \mathbb{T}\rightarrow \mathbb{R}$, we can decompose $f = f_1 + f_2$ such that
\[ ||f_1||_{\mathcal{G}} \ll ||f||_{\Gamma_K} \text{  and }\]
\[ ||f_2||_{L^2} \ll  e^{-cK^2} ||f||_{\Gamma_K},\]
for some universal constant $c > 0$.
\end{lemma}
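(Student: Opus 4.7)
The plan is to decompose $f$ by pointwise truncation at a constant multiple of $K\lambda$, where $\lambda := ||f||_{\Gamma_K}$ (so by definition $\int \Gamma_K(f/\lambda) \leq 1$). Specifically, set $M := \sqrt{2}\,K\lambda$, $f_2 := f \cdot \mathbf{1}_{\{|f| > M\}}$, and $f_1 := f - f_2$. The heuristic is that $\Gamma_K$ switches from $e^{t^2}-1$ to a quadratic at $|t|=K$: on the tail $\{|f| > K\lambda\}$ the quadratic piece of $\Gamma_K$ carries a large factor $e^{K^2}$, which is the source of the $e^{-cK^2}$ gain for $||f_2||_{L^2}$, while on $\{|f| \leq M\}$ the function stays in the exponential regime that exactly defines the $\mathcal{G}$ norm.

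For the $L^2$ bound on $f_2$: on the set $\{|f/\lambda| \geq \sqrt{2}K\}$, the definition gives $\Gamma_K(f/\lambda) = e^{K^2}(f/\lambda)^2 + e^{K^2}(1-K^2) - 1$, and since $(f/\lambda)^2 \geq 2K^2$ there a short calculation yields $\Gamma_K(f/\lambda) \geq \tfrac{1}{2}e^{K^2}(f/\lambda)^2$. Integrating,
\[ ||f_2||_{L^2}^2 = \int_{|f|>M}|f|^2 \leq 2\lambda^2 e^{-K^2} \int \Gamma_K(f/\lambda) \leq 2\lambda^2 e^{-K^2}, \]
so $||f_2||_{L^2} \ll e^{-K^2/2}\,||f||_{\Gamma_K}$, which is the claimed bound with $c = 1/2$.

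For the $\mathcal{G}$ bound on $f_1$: take $\mu := \sqrt{2}\lambda$. Since $|f_1| \leq M = \sqrt{2}K\lambda$, we have $|f_1/\mu| \leq K$ pointwise, hence $e^{|f_1/\mu|^2}-1 = \Gamma_K(f_1/\mu)$ everywhere. Using $|f_1| \leq |f|$ together with $\Gamma_K$ being even and nondecreasing on $[0,\infty)$, one gets $\Gamma_K(f_1/\mu) \leq \Gamma_K(f/\mu)$; and from convexity plus $\Gamma_K(0)=0$, the scaling bound $\Gamma_K(t/\sqrt{2}) \leq \Gamma_K(t)/\sqrt{2}$ holds. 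Combining these,
\[ \int \bigl(e^{|f_1/\mu|^2}-1\bigr) \leq \int \Gamma_K(f/\mu) \leq \tfrac{1}{\sqrt{2}}\int \Gamma_K(f/\lambda) \leq \tfrac{1}{\sqrt{2}} < 1, \]
so $||f_1||_{\mathcal{G}} \leq \mu = \sqrt{2}\,||f||_{\Gamma_K}$.

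There is no real obstacle beyond choosing constants correctly. Truncating exactly at $K\lambda$ gives the cleanest estimate for $f_1$ but only a $K$-dependent tail bound for $f_2$, because $\Gamma_K(t) = e^{K^2}(t^2 - K^2) + O(1)$ vanishes at $|t|=K$. Pushing the cutoff out to $\sqrt{2}K\lambda$ (any constant strictly greater than $1$ times $K$ would do) guarantees that the quadratic lower bound on $\Gamma_K$ absorbs the affine correction uniformly, yielding an absolute constant $c$ independent of $K$; the matching factor $\sqrt{2}$ in the rescaling of $f_1$ ensures $|f_1/\mu|$ still lies in the exponential regime $|t| \leq K$.
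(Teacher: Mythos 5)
Your proof is correct and follows essentially the same approach as the paper: truncate $f$ at a constant multiple of $K\,\|f\|_{\Gamma_K}$ so that $f_1$ falls in the exponential regime of $\Gamma_K$ and $f_2$ in the quadratic regime, then exploit the $e^{K^2}$ prefactor on the quadratic piece to extract the $e^{-cK^2}$ decay. The only differences are cosmetic: the paper truncates at $2K\|f\|_{\Gamma_K}$ and bounds $\mu(|f/\gamma|\geq K)$ as an intermediate step, whereas you truncate at $\sqrt{2}K\|f\|_{\Gamma_K}$, prove the pointwise lower bound $\Gamma_K(t)\geq \tfrac12 e^{K^2}t^2$ on the tail directly, and use convexity with $\Gamma_K(0)=0$ for the $f_1$ estimate, which tightens the constant to $c=1/2$.
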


\begin{proof} Given $f$, we define $\gamma := 2||f||_{\Gamma_K}$ to simplify our notation. We then set:
\[f_1 := f \cdot \mathbb{I}_{|\frac{f}{\gamma}|\leq K} \text{  and  } f_2 := f \cdot \mathbb{I}_{|\frac{f}{\gamma}|\geq K},\]
where $\mathbb{I}_{S}$ for a set $S\subset \mathbb{T}$ denotes the indicator function for that set.
By definition of $\gamma = 2||f||_{\Gamma_K} > ||f||_{\Gamma_K}$, we have that
\begin{equation}\label{eq:pieces}
\int \Gamma_K(f/\gamma) = \int \left(e^{|f/\gamma|^2}-1\right)\cdot  \mathbb{I}_{|\frac{f}{\gamma}|\leq K} + \int \left(e^{K^2}f^2 / \gamma^2 + e^{K^2}(1-K^2) -1 \right) \cdot \mathbb{I}_{|\frac{f}{\gamma}|\geq K} \leq 1.
\end{equation}
Since this is a sum of two non-negative quantities, this implies
\[ \int \left(e^{|f/\gamma|^2}-1\right)\cdot  \mathbb{I}_{|\frac{f}{\gamma}|\leq K} \leq 1.\]
This is equivalent to:
\[ \int e^{|f_1/\gamma|^2}-1 \leq 1,\]
and so $||f_1||_{\mathcal{G}} \leq \gamma \ll ||f||_{\Gamma_K}$.

Again considering (\ref{eq:pieces}), we also have
\[ \int \left(e^{K^2}f^2 / \gamma^2 + e^{K^2}(1-K^2) -1 \right) \cdot \mathbb{I}_{|\frac{f}{\gamma}|\geq K} \leq 1.\]
We let $\mu\left( \left|\frac{f}{\gamma}\right|\geq K\right)$ denote the measure of the set in $\mathbb{T}$ on which $|\frac{f}{\gamma}|\geq K$. We can then rewrite the above as:
\begin{equation}\label{eq:pieces2}
\mu\left( \left|\frac{f}{\gamma}\right|\geq K\right) (e^{K^2}(1-K^2) -1 ) +  \int e^{K^2}f_2^2 / \gamma^2 \leq 1.
\end{equation}

Now, since $\int \Gamma_K (f/\gamma) \leq 1$ and $\Gamma_K(f/\gamma) \geq e^{K^2}-1$ whenever $|f/\gamma|\geq K$, we must have
\[\mu\left( \left|\frac{f}{\gamma}\right|\geq K\right) (e^{K^2}-1) \leq 1.\]
Thus, $\mu\left( \left|\frac{f}{\gamma}\right|\geq K\right) \leq \frac{1}{e^{K^2}-1}$.
Combining this with (\ref{eq:pieces2}), we have
\[ \int e^{K^2}f_2^2 / \gamma^2 \leq 1 + \mu\left( \left|\frac{f}{\gamma}\right|\geq K\right)(e^{K^2}(K^2-1)+1) \ll K^2,\]
and hence
\[ ||f_2||_{L^2}^2 \ll K^2 e^{-K^2} \gamma^2, \]
implying that $||f_2||_{L^2} \ll  e^{-cK^2} ||f||_{\Gamma_K}$ for some universal constant $c > 0$.

\end{proof}

%
%
%

Finally, we note the following.

\begin{lemma}\label{chgMeas}It suffices to prove Theorem 1 with the restriction that $\sum_{n=1}^{N}|\phi_n(x)|^2 \leq N$.
\end{lemma}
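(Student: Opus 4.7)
The plan is a change of measure argument that absorbs the pointwise condition into the definition of the underlying measure. Given an arbitrary ONS $\Phi=\{\phi_n\}_{n=1}^N$ on the probability space $(\mathbb{T},\mu)$, set $S(x):=\sum_{n=1}^N|\phi_n(x)|^2$. Since $\int S\,d\mu=N$, the measure $d\nu:=(S/N)\,d\mu$ is a probability measure (carried on $\{S>0\}$). I define the reweighted system by $\tilde\phi_n(x):=\phi_n(x)\sqrt{N/S(x)}$ on $\{S>0\}$, extended by zero elsewhere. A direct computation shows $\int\tilde\phi_n\tilde\phi_m\,d\nu=\int\phi_n\phi_m\,d\mu=\delta_{nm}$, so $\{\tilde\phi_n\}$ is an ONS with respect to $\nu$, and by construction $\sum_n|\tilde\phi_n(x)|^2\le N$ pointwise, placing us inside the hypotheses of the restricted theorem.

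Apply the restricted theorem to $\{\tilde\phi_n\}$ on $(\mathbb{T},\nu)$ to obtain the set $Q\subset\mathcal{O}(N)$. I claim that for the same $O\in Q$, the transformed system $\Phi(O)$ satisfies the variational bound on the original space $(\mathbb{T},\mu)$. Writing $\psi_n:=\sum_i o_{i,n}\phi_i$ and $\tilde\psi_n:=\sum_i o_{i,n}\tilde\phi_i$, the orthogonal transformation acts fiberwise in $x$, so $\tilde\psi_n=\psi_n\sqrt{N/S}$. More generally, for $f=\sum a_n\psi_n$ and any subinterval $I\subset[N]$, the partial sum satisfies $\sum_{n\in I} a_n\tilde\psi_n=\bigl(\sum_{n\in I} a_n\psi_n\bigr)\sqrt{N/S}$, because the scalar factor $\sqrt{N/S(x)}$ is independent of the partition. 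Setting $\tilde f:=\sum a_n\tilde\psi_n$, the square variation therefore transforms pointwise as $\mathcal{V}^2\tilde f(x)=(\mathcal{V}^2 f)(x)\sqrt{N/S(x)}$.

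The two Jacobians now cancel. One has
\[\|\tilde f\|_{L^2(\nu)}^2=\int|f|^2\cdot\frac{N}{S}\cdot\frac{S}{N}\,d\mu=\|f\|_{L^2(\mu)}^2,\]
and identically $\|\mathcal{V}^2\tilde f\|_{L^2(\nu)}=\|\mathcal{V}^2 f\|_{L^2(\mu)}$. Combining with the inequality $\|\mathcal{V}^2\tilde f\|_{L^2(\nu)}\ll\sqrt{\log\log N}\,\|\tilde f\|_{L^2(\nu)}$ provided by the restricted theorem yields the desired bound for $f$, while the probability estimate $\mathbb{P}[Q]\ge1-Ce^{-cN^{2/5}}$ transfers verbatim since we use the same $O$.

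The only delicate point is the set $\{S=0\}$, and this is harmless: any function in the span of $\Phi$ necessarily vanishes there (since each $\phi_i$ does), so passing from $\mu$ to $\nu$ loses no information about the span, and all norm and variation identities above hold unchanged when the relevant integrands are declared zero on $\{S=0\}$. I expect the main technical care will simply be in verifying that this measure-zero (or positive-measure) exceptional set causes no issue in the pointwise identity for $\mathcal{V}^2\tilde f$; once that is handled, the reduction is essentially algebraic.
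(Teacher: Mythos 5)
Your argument is the same change-of-measure reduction used in the paper: you pass to the probability measure $d\nu=(S/N)\,d\mu$, rescale the system by $\sqrt{N/S}$ to enforce $\sum_n|\tilde\phi_n|^2\le N$, observe that the scaling factor depends only on $x$ so it commutes with partial sums (hence with the square variation), and then cancel the Jacobians. The paper phrases this as a single ``trivial identity'' equating the two integrals; your version spells out the same steps, including the harmless $\{S=0\}$ set.
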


\begin{proof}Consider an arbitrary ONS $\Phi := \{\phi_n\}_{n=1}^{N}$ and define $\nu(x)= N^{-1}\sum_{n=1}^{N} |\phi_n(x)|^2 $. Fix $O \in \mathcal{O}(N)$. Define $\tilde{\Phi}:=\Phi(O)$. Furthermore, consider the ONS $\Psi$ defined on $\mathbb{T}$ (with the measure induced by integration against $\nu(x)$) by $\psi_n(x) := \nu^{-1/2}(x) \phi_n(x)$. Furthermore, define $\tilde{\Psi}=\Psi(O)$. We have the trivial identity

\[ \int  \max_{\pi \in \mathcal{P}_{N}} \sum_{I \in \pi } \left| \sum_{n\in I} a_n \tilde{\phi}_n(x) \right|^2  = \int  \max_{\pi \in \mathcal{P}_{N}} \sum_{I \in \pi } \left| \sum_{n\in I} a_n \tilde{\psi}_n(x)\right|^2 \nu(x).\]

Thus, the conclusion of Theorem 1 holds for $\Phi$ if and only if it holds for $\Psi$. However $\sum_{n=1}^{N} |\psi_n(x)|^2 \leq N$ by construction.
\end{proof}

\section{Probabilistic Methods}

In this section we establish the following result:

\begin{proposition}\label{prop:main} For $N$  fixed, let $\{\phi_n(x)\}_{n=1}^N$ be an ONS such that $\sum_{n=1}^{N}|\phi_n(x)|^2 \leq N$. Define for each $1 \leq m \leq N$ the function $\Gamma_{*} := \Gamma_{\sqrt{ \frac{2}{5}\log \left(\frac{N}{m} \log(\frac{N}{m}+1) \right)}}$ (the dependence on $m$ is implicit in this notation). There exists a subset $Q \subset \mathcal{O}(N)$ with $\mathbb{P}[Q] \geq 1 - C(e^{-c N^{2/5}})$ such that for all  $O = \{o_{i,n}\}_{1\leq i,n \leq N} \in Q$ the corresponding base change of $\{\phi_n \}_{n=1}^{N}$, that is
\[ \psi_n(x) := \sum_{i=1}^{N} o_{i,n} \phi_i(x), \]
satisfies the following.
For each $m$ in the range $1 \leq m \leq N$,
\[ \left|\left| \sum_{n=1}^{N} a_n \psi_n \right|\right|_{\Gamma_{*}} \ll \left(\sum_{n=1}^{N} a_n^2 \right)^{1/2}\]
for all vectors $\mathbf{a} \in \mathbb{R}^N$ such that $\textbf{support}(\mathbf{a}) \leq m$. (We use $\textbf{support}(\mathbf{a})$ to denote the number of nonzero coordinates of $\mathbf{a}$.)
\end{proposition}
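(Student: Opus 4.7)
My plan is to combine an $\epsilon$-net argument over the possible support vectors with Levy-type concentration on $\mathcal{O}(N)$, then union-bound over supports. Fix $S \subset [N]$ with $|S| = m$ and a unit vector $\mathbf{a} \in \ell^2(S)$. The crucial reformulation is
\[ f_O(x) \ :=\ \sum_n a_n\psi_n(x)\ =\ \langle O\mathbf{a},\,\phi(x)\rangle, \]
so that $O\mathbf{a}$ is Haar-uniform on $S^{N-1}$. Lemma \ref{lem:Gamma1} yields the comparison $\|f_\mathbf{a}-f_{\mathbf{a}'}\|_{\Gamma_*}\le e^{K^2/2}\|\mathbf{a}-\mathbf{a}'\|_2$, so the map $\mathbf{a}\mapsto\|f_\mathbf{a}\|_{\Gamma_*}$ is $e^{K^2/2}$-Lipschitz, and an $e^{-K^2/2}$-net $\mathcal{N}_S$ of cardinality $\le (3e^{K^2/2})^m$ suffices to lift any individual bound to the full unit sphere with additive loss at most $1$.

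For each fixed net point $\mathbf{a}$, I will control $\|f_O\|_{\Gamma_*}$ via a combination of a moment estimate and Lipschitz concentration. Since $\mathbf{v} = O\mathbf{a} \sim \mathrm{Unif}(S^{N-1})$, the scalar $\langle\mathbf{v},\phi(x)\rangle$ is sub-Gaussian with variance proxy $\|\phi(x)\|^2/N \le 1$, and a direct MGF bound (using the density of $v_1^2 \sim \mathrm{Beta}(1/2,(N-1)/2)$) gives $\mathbb{E}[e^{\langle\mathbf{v},\phi(x)\rangle^2/C^2}] - 1 = O(\|\phi(x)\|^2/(NC^2))$ for $C$ a sufficiently large absolute constant. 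Integrating over $x$ using the hypothesis $\int \|\phi(x)\|^2 dx = N$, together with the majorization $\Gamma_*(t) \le e^{t^2}-1$, yields $\mathbb{E}_O \int \Gamma_*(f_O/C)\, dx = O(1/C^2)$, and Markov's inequality then forces the median of $\|f_O\|_{\Gamma_*}$ below an absolute constant $C_0$. Since $\|f_O - f_{O'}\|_{L^2} = \|(O-O')\mathbf{a}\|_2 \le \|O-O'\|_{op}$, the map $O \mapsto \|f_O\|_{\Gamma_*}$ is $e^{K^2/2}$-Lipschitz on $\mathcal{O}(N)$, so Levy's concentration gives
\[ \Pr_O[\|f_O\|_{\Gamma_*} > 2C_0]\ \le\ C\exp(-c N/e^{K^2}). \]

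Finally, the union bound over $\mathcal{N}_S$ and the $\binom{N}{m} \le (eN/m)^m$ choices of $S$ contributes at most $\exp(Cm\log(N/m))$. The calibration $K^2 = \tfrac{2}{5}\log(\tfrac{N}{m}\log(\tfrac{N}{m}+1))$ gives $N/e^{K^2} = N^{3/5}m^{2/5}(\log(N/m+1))^{-2/5}$, and the main technical task is to verify that this exponent dominates $Cm\log(N/m)$ by a uniform margin of at least $c'N^{2/5}$ throughout the regime $K \ge 1$ (i.e.\ $m$ smaller than a fixed fraction of $N$). The complementary regime $K < 1$ is handled deterministically via the quadratic majorization $\Gamma_K(t) \le e\,t^2$, which immediately yields $\|f\|_{\Gamma_*} \le \sqrt{e}\,\|\mathbf{a}\|_2$ without any probabilistic input. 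The hard part will be this uniform balancing: the exponent $2/5$ in the definition of $K$ is tuned precisely so that $N/e^{K^2}$ outruns the union-bound cost $Cm\log(N/m)$ across every $m \in [1,cN]$, with enough slack that summation over $m$ collapses the total failure to the claimed $Ce^{-cN^{2/5}}$.
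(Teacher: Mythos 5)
Your proposal takes a genuinely different route from the paper. The paper controls the expected supremum $\mathbb{E}_O B(m,O)$ over all of $\mathbb{S}_m$ at once via a Chevet/Marcus--Pisier operator estimate (Lemma~\ref{lem:OrthDecomp}), reducing the problem to a Dudley entropy integral for $\mathbb{S}_m$ and the easy bound $\mathbb{E}\|\sum g_i\phi_i\|_{\Gamma_*}\ll\sqrt N$; measure concentration then only has to absorb a union bound over the $N$ choices of $m$. You instead fix each $\mathbf{a}$, bound the median of $\|f_{O,\mathbf{a}}\|_{\Gamma_*}$ by a Beta-MGF computation, apply L\'evy concentration per $\mathbf{a}$, and union-bound over an $e^{-K^2/2}$-net of $\mathbb{S}_m$ and over all $\binom{N}{m}$ supports. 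Your route is more elementary and self-contained (no Chevet lemma), at the cost of an $\exp\bigl(Cm\log(N/m)\bigr)$-sized union bound that must be beaten by the per-point concentration exponent, whereas the paper's expectation-of-the-sup step sidesteps that cost entirely.

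The genuine gap is the calibration you defer. Writing $u = N/m$, one computes
\[\frac{N/e^{K^2}}{m\log(N/m)}=\frac{u^{3/5}}{(\log u)\bigl(\log(u+1)\bigr)^{2/5}},\]
whose infimum over $u>1$ is a numerical constant only slightly larger than $1$ (attained near $u\approx e^{7/3}$). So whether $c_1\,N/e^{K^2}-Cm\log(N/m)\geq c'N^{2/5}$ holds is a race between absolute constants you have not pinned down: the L\'evy constant $c_1$, your median bound $C_0$, and the constants coming from $\binom{N}{m}\leq(eN/m)^m$ and the $(3e^{K^2/2})^m$ net. The fix exists but must be stated: the threshold $T$ in $\Pr\bigl[\|f_{O,\mathbf{a}}\|_{\Gamma_*}>T\bigr]\leq C_1 e^{-c_1(T-C_0)^2 N/e^{K^2}}$ is a free absolute constant, and enlarging $T$ makes $c_1(T-C_0)^2$ as large as needed while the implied constant in the Proposition merely becomes $T+1$ after the net step. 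You should also observe that your deterministic $K<1$ patch covers only $m\gtrsim N/7$, and the balancing above is needed --- and tightest --- precisely in the adjacent range $u\approx 7$ to $15$, so the two regimes must genuinely hand off. Finally, the median estimate deserves a line of justification: one needs a bound of the form $\mathbb{E}[e^{tv_1^2}]\leq(1-2t/N)^{-1/2}$ for $v_1^2\sim\mathrm{Beta}(1/2,(N-1)/2)$ and $t<N/2$, from which $\mathbb{E}[e^{\|\phi(x)\|^2 v_1^2/C^2}]-1\ll\|\phi(x)\|^2/(NC^2)$ follows only for $C$ at least a fixed constant larger than $\sqrt{2}$; this is where $C_0$ originates and should be made explicit.
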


The proof will build on arguments from \cite{Bour}, although the estimates we obtain are substantially stronger. We start by establishing a weaker result.
For a fixed $m$ in the range $1 \leq m \leq N$, we let $\mathbb{S}_m \subset \mathbb{R}^N$ denote the subset of vectors $\mathbf{b}$ such that $|| \mathbf{b} ||_{2} \leq 1$ and support$(\mathbf{b}) \leq m$.
We then define
\[ B(m, \mathcal{O}) :=  \sup_{\mathbf{a} \in \mathbb{S}_m}  || \sum_{n=1}^{N} a_n \psi_n ||_{\Gamma_{*}}.\]
Note that both the set $\mathbb{S}_m$ and the function $\Gamma_{*}:=\Gamma_{ \sqrt{\frac{2}{5}\log \left(\frac{N}{m} \log(\frac{N}{m}+1) \right)}}$ depend on $m$. Our first step will be to establish the following:

\begin{proposition}\label{prop:SingleM}For any $1 \leq m \leq N$ we have that
\[\mathbb{E}_{\mathcal{O}(N)} B(m, O) \ll 1\]
where the implied constant is independent of $m$ and $N$.
\end{proposition}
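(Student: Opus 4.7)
The plan is to reduce, via Haar invariance, to a moment estimate for a single unit vector, and then lift to the supremum over $\mathbb{S}_m$ by an $\varepsilon$-net argument. The starting observation is that for any unit vector $\mathbf{a}\in\mathbb{S}_m$, the rotated vector $\mathbf{c}:=O\mathbf{a}$ is Haar-uniform on $S^{N-1}$, so that $f_\mathbf{a}(x)=\sum_i c_i\phi_i(x)=\langle\mathbf{c},\mathbf{v}(x)\rangle$ with $\mathbf{v}(x):=(\phi_i(x))_{i=1}^N$ satisfying $\|\mathbf{v}(x)\|_2^2\leq N$. The standard spherical moment identity $\mathbb{E}_{\mathbf{c}}|\langle\mathbf{c},\mathbf{v}\rangle|^{2k}\ll(Ck/N)^k\|\mathbf{v}\|_2^{2k}$ and Fubini then give
\begin{equation*}
\mathbb{E}_O\int_{\mathbb{T}}|f_\mathbf{a}(x)|^{2k}\,dx\ll(Ck)^k\qquad\text{for all } k\geq 1.
\end{equation*}

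Next I would convert these moment estimates into an expectation bound on $\int\Gamma_*(f_\mathbf{a}/\lambda)\,dx$. On the set $\{|f/\lambda|\leq K\}$, Taylor expanding $e^{t^2}-1=\sum_{k\geq1}t^{2k}/k!$ and inserting the moment bound produces a convergent series of order $\sum_k(Ck)^k/(k!\lambda^{2k})=O(1)$ once $\lambda$ is a sufficiently large absolute constant. On the set $\{|f/\lambda|>K\}$, where $\Gamma_*(t)\leq e^{K^2}t^2$ by Lemma~\ref{lem:Gamma1}, Chebyshev at a moment of order $k\asymp K^2\lambda^2$ gives $\mathbb{E}\int f^2/\lambda^2\,\mathbb{I}_{|f|>K\lambda}\ll K^2(Ck/(K^2\lambda^2))^k$, which at the optimal $k$ beats the $e^{K^2}$ prefactor provided $\lambda$ is chosen large enough. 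Combining the two pieces yields $\mathbb{E}_O\int\Gamma_*(f_\mathbf{a}/\lambda)\ll 1$ at some absolute $\lambda$, and an iteration of the same computation at higher $q$-th powers yields $\mathbb{E}_O\|f_\mathbf{a}\|_{\Gamma_*}^p\ll C^p$ for every $p$ up to a threshold of size $\sim m\log(N/m)$ (the threshold being controlled by the size of $K$).

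For the final step I would cover $\mathbb{S}_m$ by $\binom{N}{m}$ unit balls (one per $m$-subset of coordinates) and $\varepsilon$-net each one in the standard way, producing a net $\mathcal{N}_m$ with $\log|\mathcal{N}_m|\ll m\log(N/m)+m\log(1/\varepsilon)$. Lemma~\ref{lem:Gamma1} furnishes the deterministic Lipschitz bound $\|f_\mathbf{a}-f_{\mathbf{a}'}\|_{\Gamma_*}\leq e^{K^2/2}\|\mathbf{a}-\mathbf{a}'\|_2$, so taking $\varepsilon\asymp e^{-K^2/2}$ keeps the discretization error $O(1)$ while leaving $\log|\mathcal{N}_m|\ll m\log(N/m)$ (since $K^2=O(\log(N/m))$). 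The high-moment bound of the previous step at $p\asymp\log|\mathcal{N}_m|$ then produces $\mathbb{E}_O\sup_{\mathbf{a}\in\mathcal{N}_m}\|f_\mathbf{a}\|_{\Gamma_*}\leq|\mathcal{N}_m|^{1/p}\bigl(\mathbb{E}_O\|f_\mathbf{a}\|_{\Gamma_*}^p\bigr)^{1/p}\ll 1$ by the standard $\ell^p$-union bound, which combined with the discretization error yields the conclusion $\mathbb{E}_O B(m,O)\ll 1$.

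The principal difficulty is the high-moment estimate $\mathbb{E}_O\|f_\mathbf{a}\|_{\Gamma_*}^p\ll C^p$ at the relatively large power $p\asymp m\log(N/m)$: the naive Jensen inequality $(\int F)^p\leq\int F^p$ for $F=\Gamma_*(f/\lambda)$ loses the $e^{pK^2}$ factor coming from the tail of $\Gamma_*$, which is much too wasteful to survive the union bound. The fix is to carry out the splitting of $\Gamma_*$ into its truncated and tail pieces \emph{inside} the $p$-th power and exploit the spherical distribution of $\mathbf{c}$ more delicately, and it is precisely this balance between moment growth, the $e^{K^2}$ tail penalty, and the cardinality of $\mathcal{N}_m$ that fixes the specific exponent in the definition of $K$.
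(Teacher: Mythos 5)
Your route is genuinely different from the paper's. The paper applies a Chevet/Marcus--Pisier--type random factorization inequality on $\mathcal{O}(N)$ (Lemma~\ref{lem:OrthDecomp}), which directly controls $\mathbb{E}_{\mathcal{O}(N)}\sup_{\mathbf{a}\in\mathbb{S}_m}\|f_{\mathbf a}\|_{\Gamma_*}$ by two Gaussian averages; the supremum is then absorbed into $\mathbb{E}\|\sum g_i x_i^*\|_{X^*}=\mathbb{E}\sup_{\mathbf a\in\mathbb{S}_m}|\sum g_ia_i|$, which is handled by Dudley's entropy integral. In other words the paper never takes a union bound over a net: it passes through the factorization inequality, which handles the supremum at the Gaussian-process level, and the entropy of $\mathbb{S}_m$ enters only through Dudley. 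The $2/5$ exponent in $K$ then emerges from balancing $\alpha(\{\phi_i\})\ll e^{K^2/2}$ against $\mathbb{E}\|\sum g_ix_i^*\|_{X^*}/\sqrt N\ll\sqrt{m\log(N/m+1)/N}$. You instead try to reduce to a single fixed $\mathbf a$ by Haar invariance, establish a bound there, then run a high-moment union bound over a net of $\mathbb{S}_m$ to lift it to the supremum.

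The proposal has a genuine gap, which you in fact candidly name: the step $\mathbb{E}_O\|f_{\mathbf a}\|_{\Gamma_*}^p\ll C^p$ for $p\asymp m\log(N/m)$. The single-point estimate $\mathbb{E}\int\Gamma_*(f_{\mathbf a}/\lambda)\ll 1$ that you derive from the spherical moment identity only controls the first moment of $\|f_{\mathbf a}\|_{\Gamma_*}$ (and even then only after some care, since $\int\Gamma_*(f/\lambda)\le1$ is needed, not just the expectation of the integral), whereas the union bound over $\log|\mathcal{N}_m|\asymp m\log(N/m)$ net points requires tail decay of order $e^{-cm\log(N/m)}$. You observe that passing crudely to $p$-th powers loses the $e^{pK^2/2}$ worst-case factor and assert that ``carrying out the splitting inside the $p$-th power'' fixes this, but that is precisely the nontrivial computation, and it is left undone. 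It is not at all clear that moments alone close the gap uniformly in $m$: the deterministic ceiling $\|f_{\mathbf c}\|_{\Gamma_*}\le e^{K^2/2}$ combined with a bad event of probability $\sim e^{-cN}$ contributes $e^{pK^2/2-cN}$ to $\mathbb{E}\|f_{\mathbf c}\|_{\Gamma_*}^p$, which is not bounded by $C^p$ unless $pK^2\ll N$, i.e.\ unless $m\log^2(N/m)\ll N$, and this fails for $m$ close to $N$. A cleaner way to carry out your plan would be to replace the moment calculation by L\'evy's isoperimetric concentration on $S^{N-1}$: the map $\mathbf c\mapsto\|\sum_i c_i\phi_i\|_{\Gamma_*}$ is $e^{K^2/2}$-Lipschitz (Lemma~\ref{lem:Gamma1}), so $\mathbb{P}[\|f_{\mathbf c}\|_{\Gamma_*}>M+t]\ll e^{-cNt^2/e^{K^2}}$ with median $M=O(1)$, and with the paper's $K$ one checks that $Nt^2/e^{K^2}\gg m\log(N/m)$ already for $t=O(1)$. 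As written, however, the proposal stops at the hard step rather than carrying it out, so it does not yet constitute a proof.
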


This does not quite give Proposition \ref{prop:main}, since there the claim is made with large probability and we require the estimates to hold for all $m$ simultaneously. The stronger claim, however, will be deduced later from the weaker statement using the concentration of measure phenomenon on the orthogonal group.

We will need the following result. This is Lemma 5.5 from \cite{Bour}. There it is attributed to \cite{BG}. The result is a concatenation of Lemma 1.10 and 1.12 in \cite{BG}. These are due to \cite{Chevet} and \cite{MP}, respectively.

\begin{lemma}\label{lem:OrthDecomp}Let $X$ and $Y$ be Banach spaces and consider the operator
\[T_{O} := \sum_{i,j =1}^{N} o_{ij}( x_i^{*} \otimes y_j )  \]
for $O := (o_{ij})_{1\leq i,j \leq N} \in \mathcal{O}(N)$, and where $\{x_i^{*}\}_{i=1}^{N}$ (respectively $\{y_j\}_{j=1}^{N}$) are sequences in $X^{*}$ (respectively $Y$). Then,
\begin{equation}\label{eq:operator}
\int_{\O(N)} ||T_{O} ||  \leq \frac{C \alpha(\{x_i^*\}_{i=1}^{N})}{\sqrt{N}} \int \left|\left| \sum_{j=1}^{N} g_j(\omega) y_j \right|\right| d\omega
+ \frac{C \alpha(\{y_j\}_{j=1}^{N})}{\sqrt{N}} \int \left|\left| \sum_{i=1}^{N} g_i(\omega) x_{i}^{*} \right|\right| d \omega
\end{equation}
where
\[ \alpha(\{x_i^{*}\} ) := \sup \{ ( \sum | \left< x_i^{*}, x \right>|^2 )^{1/2} : x \in X , ||x|| \leq 1 \}, \]
\[ \alpha(\{y_j\} ) := \sup \{ ( \sum | \left< y_j, y^* \right>|^2 )^{1/2} : y^{*} \in Y^* , ||y^{*}|| \leq 1 \}, \]
and $\{g_i\}_{i=1}^N$ is a system of independent Gaussians with mean zero and variance one. Note that the norms in (\ref{eq:operator}) refer respectively to the Banach spaces $B(X,Y)$, $Y$, and $X^*$.
\end{lemma}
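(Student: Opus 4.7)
The plan is to recognize $B(m,O)$ as the norm of an operator of the Chevet--Marcus--Pisier form and apply Lemma \ref{lem:OrthDecomp} directly. Equip $\mathbb{R}^N$ with the Banach space norm $\|\cdot\|_X$ whose unit ball is the absolute convex hull of $\mathbb{S}_m$. Since $\|\cdot\|_{\Gamma_*}$ is a norm, $\sup_{\mathbf{a}\in\mathbb{S}_m}\|T_O\mathbf{a}\|_{\Gamma_*} = \sup_{\mathbf{a}\in\mathrm{conv}(\mathbb{S}_m)}\|T_O\mathbf{a}\|_{\Gamma_*}$, so $B(m,O) = \|T_O\|_{X\to L_{\Gamma_*}(\mathbb{T})}$ for $T_O := \sum_{i,n} o_{i,n}(e_n^*\otimes \phi_i)$, exactly the operator to which Lemma \ref{lem:OrthDecomp} applies.

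I next evaluate the two $\alpha$-quantities appearing in (\ref{eq:operator}). On the $X$-side, $\alpha(\{e_n^*\}) = \sup_{\|\mathbf{a}\|_X\le 1}(\sum_n a_n^2)^{1/2} = \sup_{\mathbf{a}\in\mathbb{S}_m}\|\mathbf{a}\|_2 = 1$. On the $Y = L_{\Gamma_*}$ side, Lemma \ref{lem:Gamma1} gives the continuous embedding $L^2(\mathbb{T})\hookrightarrow L_{\Gamma_*}(\mathbb{T})$ with norm at most $e^{K_m^2/2}$, where $K_m := \sqrt{(2/5)\log((N/m)\log(N/m+1))}$ is the parameter of $\Gamma_*$; dualizing yields $\|y^*\|_{L^2}\le e^{K_m^2/2}\|y^*\|_{Y^*}$, and combining this with Bessel's inequality for the ONS $\{\phi_i\}$ gives $\alpha(\{\phi_i\})\le e^{K_m^2/2}$.

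The heart of the argument is the two Gaussian integrals in (\ref{eq:operator}). The $X^*$-side one unfolds to $\mathbb{E}\sup_{\mathbf{a}\in\mathbb{S}_m}\big|\sum_n g_n a_n\big| = \mathbb{E}\big(\sum_{n\in S^*}g_n^2\big)^{1/2}$, where $S^*$ indexes the $m$ largest of $|g_1|,\ldots,|g_N|$; standard Gaussian tail estimates bound this by $\ll\sqrt{m(1+\log(N/m))}$. For the $Y$-side integral I would use $\|\cdot\|_{\Gamma_*}\le\|\cdot\|_{\mathcal{G}}$ (Lemma \ref{lem:Gamma1}) and prove $\mathbb{E}_\omega\|\sum_i g_i\phi_i\|_{\mathcal{G}}\ll\sqrt{N}$. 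The pointwise hypothesis $\sum_i|\phi_i(x)|^2\le N$ makes $f_\omega(x):=\sum_i g_i(\omega)\phi_i(x)$ a centered Gaussian of variance $\le N$ for each fixed $x$; Fubini and the Taylor expansion $e^{t^2}-1=\sum_{k\ge 1}t^{2k}/k!$ then give $\mathbb{E}_\omega\int(e^{|f_\omega/\lambda|^2}-1)\,dx \le \sum_{k\ge 1}(2N/\lambda^2)^k\le 1/2$ for $\lambda = C\sqrt{N}$ with $C$ large, so Markov forces the median of $\|f_\omega\|_{\mathcal{G}}$ to be $O(\sqrt{N})$. Since $\mathbf{g}\mapsto\|\sum_i g_i\phi_i\|_{\mathcal{G}}$ is $O(\sqrt{N})$-Lipschitz in the Euclidean $\mathbf{g}$-norm (because $\|\sum_i(g_i-g_i')\phi_i\|_{L^\infty}\le\sqrt{N}\|\mathbf{g}-\mathbf{g}'\|_2$ and $\|\cdot\|_{\mathcal{G}}\ll\|\cdot\|_{L^\infty}$ on a probability space), Gaussian isoperimetric concentration upgrades the median bound to $\mathbb{E}_\omega\|f_\omega\|_{\mathcal{G}}\ll\sqrt{N}$.

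Assembling everything into (\ref{eq:operator}) yields $\int_{\mathcal{O}(N)}B(m,O)\,dO \ll 1+\tfrac{e^{K_m^2/2}}{\sqrt N}\sqrt{m(1+\log(N/m))}$. Writing $t = N/m\ge 1$ and substituting $e^{K_m^2/2} = (t\log(t+1))^{1/5}$, the second term is $\ll t^{-3/10}(1+\log t)^{7/10}$, which is bounded uniformly in $t\ge 1$; this is precisely why the exponent $2/5$ is hard-wired into $K_m$. The main obstacle is the Orlicz bound $\mathbb{E}\|\sum_i g_i\phi_i\|_{\mathcal{G}}\ll\sqrt{N}$: the Luxemburg norm is defined implicitly, so one cannot simply exchange expectation and the defining integral, and the argument must fuse a moment/Markov estimate for the median with Gaussian isoperimetric concentration to pass from the median to the expectation.
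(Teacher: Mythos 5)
You have not proved Lemma \ref{lem:OrthDecomp}; you have \emph{assumed} it. Your very first sentence announces the plan to ``apply Lemma \ref{lem:OrthDecomp} directly,'' and what follows is a proof of Proposition \ref{prop:SingleM} (the bound $\mathbb{E}_{\mathcal{O}(N)}B(m,O)\ll 1$), not of the operator-norm inequality \eqref{eq:operator} you were asked to establish. A proof of Lemma \ref{lem:OrthDecomp} would have to do something quite different in kind: compare the Haar integral over $\mathcal{O}(N)$ with a Gaussian random matrix average (for instance via the polar decomposition $G=O\,(G^{*}G)^{1/2}$ of a Gaussian matrix and the concentration of the singular values of $G/\sqrt{N}$ near $1$), and then invoke Chevet's two-sided estimate bounding $\mathbb{E}\bigl\|\sum_{i,j}g_{ij}\,x_i^{*}\otimes y_j\bigr\|$ by the weak-$\ell^2$ constants $\alpha(\{x_i^{*}\})$, $\alpha(\{y_j\})$ times the one-sided Gaussian averages $\mathbb{E}\|\sum g_j y_j\|$ and $\mathbb{E}\|\sum g_i x_i^{*}\|$. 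None of that machinery appears in your write-up; the $\alpha$-quantities and the Gaussian integrals enter only because you are feeding them into a statement you take for granted, which is circular.

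For the record, the paper does not prove this lemma either; it imports it as Lemma 5.5 of \cite{Bour}, attributed there to \cite{BG}, which is in turn a concatenation of results of \cite{Chevet} and \cite{MP}. The argument you actually wrote, read as a proof of Proposition \ref{prop:SingleM}, is substantially correct and parallels the paper's proof of that proposition, with two local deviations: you replace the Dudley entropy-integral bound for $\mathbb{E}\sup_{\mathbf{a}\in\mathbb{S}_m}|\sum g_n a_n|$ with a direct order-statistics estimate on the top $m$ Gaussians, and you replace the paper's elementary $e^{f^2/\lambda}\le 1+e^{f^2}/\lambda$ Fubini device for $\mathbb{E}\|\sum g_i\phi_i\|_{\Gamma_*}$ with a median bound upgraded by Gaussian isoperimetric concentration. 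Both variants are workable. But they prove the wrong statement.
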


Let $\ell^{2}[N]$ denote the set of real sequences $\mathbf{a} := \{a_n\}_{n=1}^{N}$. We will denote by $X$ the Banach space obtained by considering this set with the norm $|| \cdot ||_{[m]}$ defined as follows. For a vector $\mathbf{a}$, we define $||\mathbf{a}||_{[m]}$ to be the infimum of positive $c \in \mathbb{R}$ such that scaling the convex hull of $\mathbb{S}_m$ by $c$ results in a set containing $\mathbf{a}$.
We take $Y$ to be the space of real-valued functions on $\mathbb{T}$ equipped with the Orlicz norm associated to $\Gamma_{*}$.

Let $x_i^*$ ($1\leq i \leq N$) denote the canonical unit vectors in $\mathbb{R}^N$ (which is naturally identified with the dual space $X^*$). We have, from Lemma \ref{lem:OrthDecomp}, that
\[\mathbb{E} B(m,\mathcal{O}) \ll \frac{ \alpha(\{x^*_i\}_{i=1}^{N}) }{\sqrt{N}} \mathbb{E}||\sum g_i \phi_i ||_{\Gamma_*} + \frac{\alpha(\{\phi_i\}_{i=1}^{N})}{\sqrt{N}} \mathbb{E} || \sum g_i x_i^*||_{X^*}.\]

In order to establish Proposition \ref{prop:SingleM}, we need to show the above is $\ll 1$. This follows from the following estimates:
\[\alpha(\{x^*_i\}_{i=1}^{N}) \ll 1,\]
\[\alpha(\{\phi_i\}_{i=1}^{N}) \ll \left( \frac{N}{m} \log \left(\frac{N}{m} +1\right) \right)^{1/5}, \]
\[\mathbb{E}||\sum g_i \phi_i ||_{\Gamma_*}  \leq \sqrt{N}, \]
\[\mathbb{E} || \sum g_i x_i^*||_{X^*} \leq \sqrt{m} \sqrt{\log \left(\frac{N}{m}+1 \right)}. \]
The first estimate above follows from the observation that the convex hull of $\mathbb{S}_m$ is contained in the $\ell^2$ unit ball in $\mathbb{R}^N$.
We will prove the others in the following lemmas.

\begin{lemma}We have that $\mathbb{E}||\sum g_i \phi_i ||_{\Gamma_*}  \ll \sqrt{N} $.
\end{lemma}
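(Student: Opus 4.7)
My plan is to prove the stronger statement $\mathbb{E}_g\,\|\sum_i g_i \phi_i\|_{\mathcal{G}} \ll \sqrt{N}$, from which the lemma follows immediately via $\|\cdot\|_{\Gamma_*} \leq \|\cdot\|_{\mathcal{G}}$ (Lemma \ref{lem:Gamma1}). The point is that the $m$-dependent truncation built into $\Gamma_*$, which will be exploited on the dual side to control $\alpha(\{\phi_i\})$ and $\mathbb{E}\|\sum g_i x_i^*\|_{X^*}$, is not needed here: once the independent Gaussians are in place, the random function $f(x) := \sum_i g_i \phi_i(x)$ is already strictly sub-Gaussian in $x$ at the right scale, thanks to the pointwise hypothesis $\sum_i \phi_i(x)^2 \leq N$.

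Concretely, freezing $x \in \mathbb{T}$, $f(x)$ is a mean-zero Gaussian with variance $\sigma(x)^2 := \sum_i \phi_i(x)^2 \leq N$, while $\int_{\mathbb{T}} \sigma(x)^2 \, dx = \sum_i \|\phi_i\|_{L^2}^2 = N$ by orthonormality. For any $\lambda$ with $\lambda^2 \geq 4N$, the bound $2\sigma(x)^2/\lambda^2 \leq 1/2$ holds everywhere, so the $\chi^2$ moment generating function gives
\[ \mathbb{E}_g\!\left[e^{f(x)^2/\lambda^2}\right] = \frac{1}{\sqrt{1-2\sigma(x)^2/\lambda^2}} \leq 1 + \frac{2\sigma(x)^2}{\lambda^2}. \]
Integrating over $x$ and applying Fubini,
\[ \mathbb{E}_g \int_{\mathbb{T}} \left(e^{f(x)^2/\lambda^2} - 1\right) dx \leq \frac{2}{\lambda^2}\int_{\mathbb T}\sigma(x)^2\,dx = \frac{2N}{\lambda^2}. \]

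By Markov's inequality, for every $\lambda \geq 2\sqrt{N}$,
\[ \mathbb{P}\bigl(\|f\|_{\mathcal{G}} > \lambda\bigr) \leq \mathbb{P}\!\left(\int_{\mathbb{T}}(e^{f^2/\lambda^2}-1)\,dx > 1\right) \leq \frac{2N}{\lambda^2}, \]
and integrating the tail yields
\[ \mathbb{E}_g \|f\|_{\mathcal{G}} \leq 2\sqrt{N} + \int_{2\sqrt{N}}^{\infty} \frac{2N}{\lambda^2}\, d\lambda \ll \sqrt{N}, \]
as required. I do not foresee any real obstacle in this approach; the only mild care is in choosing the threshold $\lambda^2 \geq 4N$ so that the Gaussian MGF is bounded uniformly in $x$ by the pointwise hypothesis, and then noting that the $\mathcal{G}$-bound obtained is actually stronger than what $\Gamma_*$ demands, so the $m$-dependence on this side of the Chevet estimate can be discarded without loss.
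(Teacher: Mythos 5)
Your proof is correct, and it is essentially the same argument as the paper's: both reduce to estimating $\mathbb{E}_g\int_{\mathbb{T}} e^{f(x)^2/\lambda^2}\,dx$ at scale $\lambda\asymp\sqrt{N}$ via Fubini plus the fact that, for fixed $x$, $f(x)$ is a mean-zero Gaussian with variance $\sigma(x)^2=\sum_i\phi_i(x)^2\le N$, and both then observe that $\|\cdot\|_{\Gamma_*}\le\|\cdot\|_{\mathcal G}$ (Lemma~\ref{lem:Gamma1}) renders the $m$-dependent truncation in $\Gamma_*$ irrelevant on this side of the Chevet estimate. The only cosmetic difference is the last step: the paper bounds the Orlicz norm pointwise in $g$ by $\|f/\sqrt{CN}\|_{\mathcal G}\ll 1+\int e^{f^2/(CN)}$ and then takes $\mathbb{E}_g$ directly, whereas you first extract a quadratic tail bound $\mathbb{P}(\|f\|_{\mathcal G}>\lambda)\le 2N/\lambda^2$ for $\lambda\ge 2\sqrt{N}$ via Markov and then integrate it; both are standard Markov-type reductions and yield the same $\ll\sqrt{N}$ bound. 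Your explicit MGF inequality $(1-u)^{-1/2}\le 1+u$ for $u\le\tfrac12$ is fine, and your use of $\int\sigma^2=N$ rather than merely $\sigma^2\le N$ pointwise is a harmless refinement that does not change the order of the estimate.
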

\begin{proof}
Letting $C$ be a positive constant, by Fubini's theorem we have that $\mathbb{E} \int e^{ (\sum g_i \phi_i(x) )^2/(CN)} dx = \int \mathbb{E}  e^{ (\sum g_i \phi_i(x) )^2/CN} dx.$
Now, for each fixed $x$, we recall that $\sum_i |\phi_i(x)|^2 \leq N$, so $\frac{1}{\sqrt{CN}} \sum g_i \phi_i(x)$ is a Gaussian random variable with mean 0 and variance at most $\frac{1}{C}$. Thus, $\int \mathbb{E} e^{ (\sum g_i \phi_i(x) )^2/(CN)} dx \ll 1$ for an appropriate choice of $C$.

Since $e^{f^2/\lambda} \leq 1 + \frac{e^{f^2}}{\lambda}$ for $\lambda \geq 1$, we have that $\inf_{\lambda \in \mathbb{R}^{+}} \left\{ \int e^{|f  /\lambda|^2} \leq 2 \right\} \ll 1 + \int e^{|f |^2}$.
Applying this to $f = \frac{1}{\sqrt{CN}} \sum g_i \phi_i$, we have
\[ \left|\left|\frac{1}{\sqrt{CN}} \sum g_i \phi_i \right|\right|_{\Gamma_*} \leq \int e^{ (\sum g_i \phi_i(x) )^2/(CN)} dx.\]
Taking expectations on both sides, we have $\mathbb{E}||\sum g_i \phi_i ||_{\Gamma_*}  \ll \sqrt{N}$, as required.

\end{proof}

\begin{lemma}We have that $\alpha(\{\phi_i\}_{i=1}^{n}) \ll \left( \frac{N}{m} \log \left(\frac{N}{m}+1 \right) \right)^{1/5} $.
\end{lemma}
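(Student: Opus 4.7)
The plan is to unpack the definition of $\alpha(\{\phi_i\}_{i=1}^{N})$ as a supremum over the dual unit ball, use Bessel's inequality to reduce the problem to controlling the $L^2$ norm of elements of that dual ball, and then exploit the continuous inclusion $L^2 \hookrightarrow L^{\Gamma_*}$ supplied by Lemma \ref{lem:Gamma1} via duality.

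Concretely, with $Y = L^{\Gamma_*}(\mathbb{T})$, any $y^* \in Y^*$ with $\|y^*\|_{Y^*} \leq 1$ can be identified with a function so that $\langle \phi_i, y^* \rangle = \int \phi_i(x)\, y^*(x)\, dx$. Since $\{\phi_i\}_{i=1}^{N}$ is orthonormal in $L^2(\mathbb{T})$, Bessel's inequality gives
\[
\sum_{i=1}^{N} |\langle \phi_i, y^* \rangle|^2 \;\leq\; \|y^*\|_{L^2}^2.
\]
Thus it is enough to show that $\|y^*\|_{L^2} \ll \bigl(\tfrac{N}{m}\log(\tfrac{N}{m}+1)\bigr)^{1/5}$ uniformly over the unit ball of $Y^*$.

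For this I would invoke the second estimate of Lemma \ref{lem:Gamma1}, namely $\|f\|_{\Gamma_*} \leq e^{K^2/2}\|f\|_{L^2}$ where $K^2 = \tfrac{2}{5}\log\bigl(\tfrac{N}{m}\log(\tfrac{N}{m}+1)\bigr)$. By the standard Orlicz H\"older-type inequality, for every $f$ with $\|f\|_{L^2}\le 1$,
\[
\left| \int f\, y^* \right| \;\ll\; \|f\|_{\Gamma_*}\,\|y^*\|_{Y^*} \;\leq\; e^{K^2/2}.
\]
Taking the supremum over such $f$ yields $\|y^*\|_{L^2} \ll e^{K^2/2}$. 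A direct calculation shows $e^{K^2/2} = \bigl(\tfrac{N}{m}\log(\tfrac{N}{m}+1)\bigr)^{1/5}$, which combined with Bessel gives the claimed bound on $\alpha(\{\phi_i\})$.

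There is no real obstacle in this argument: the entire proof is essentially a one-line consequence of the $L^2 \hookrightarrow L^{\Gamma_*}$ embedding proved in Lemma \ref{lem:Gamma1}, dualized. The only minor point to record is the absolute constant arising from the Orlicz H\"older inequality, which is harmless inside $\ll$.
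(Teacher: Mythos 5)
Your proof is correct and takes essentially the same approach as the paper: both use Bessel's inequality to reduce to an $L^2$ bound on the dual unit ball and then dualize the embedding $\|f\|_{\Gamma_*}\le e^{K^2/2}\|f\|_{L^2}$ from Lemma \ref{lem:Gamma1}. The only cosmetic difference is that you invoke the Orlicz H\"older inequality and take a supremum over the $L^2$ unit ball, whereas the paper lower-bounds $\|g\|_{\Gamma_*^*}$ by testing against $g$ itself; the paper also explicitly records that $\Gamma_*$ satisfies $\Delta_2$ (guaranteeing the dual is represented by functions), a point you gesture at but do not spell out.
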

\begin{proof}

From Lemma \ref{lem:Gamma1} it follows that $||f||_{\Gamma_{*}} \leq \left( \frac{N}{m} \log \left(\frac{N}{m}+1 \right) \right)^{1/5} ||f||_{L^2}$. Now
\[||g||_{\Gamma_{*}^{*}} = \sup_{f \in \Gamma_{*}} \frac{\left<f,g\right>}{||f||_{\Gamma_{*} }} \geq \frac{\left<g,g\right>}{||g||_{\Gamma_{*} }}  \gg \frac{||g||^2_2 }{\left( \frac{N}{m} \log \left(\frac{N}{m}+1 \right) \right)^{1/5} ||g||_{2}}\]
\[ \gg \left( \frac{N}{m} \log \left(\frac{N}{m}+1 \right) \right)^{-1/5} ||g||_{2}.\]
Here we have used that the each element of the dual space $\Gamma_{*}^{*}$ can be represented as by integration against a measurable function.  This follows from standard properties of Orlicz spaces. In particular, see Theorem 14.2 of \cite{KR} since the modulus $\Gamma_{*}$ satisfies the $\Delta_{2}$ condition.

It now follows that if $ ||g||_{\Gamma_{*}^*} \leq 1 $ then $||g||_{2} \ll \left( \frac{N}{m} \log \left(\frac{N}{m}+1 \right) \right)^{1/5}$. Thus by Bessel's inequality we have
\[ \alpha(\{\phi_j\} ) := \sup \{ ( \sum | \left< \phi_i, g \right>|^2 )^{1/2} : g \in {\Gamma_{*}^{*}} , ||g||_{{\Gamma_{*}^{*}}} \leq 1 \} \ll\left( \frac{N}{m} \log \left(\frac{N}{m}+1 \right) \right)^{1/5}, \]
which completes the proof.
\end{proof}

\begin{lemma}\label{lem:1} We have that $\mathbb{E} || \sum g_i x_i^*||_{X^*} \leq \sqrt{m} \sqrt{\log \left(\frac{N}{m} +1\right)}$.
\end{lemma}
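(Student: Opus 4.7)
The plan is to first identify $\|\cdot\|_{X^*}$ explicitly and then reduce to a standard tail bound for the top $m$ order statistics of $g_1^2,\dots,g_N^2$. By the definition of $\|\cdot\|_{[m]}$, the unit ball of $X$ is the closed convex hull of $\mathbb{S}_m$, so for $y \in X^* \cong \mathbb{R}^N$,
\[ \|y\|_{X^*} \;=\; \sup_{a \in \mathbb{S}_m}\langle y, a\rangle \;=\; \max_{|S|=m}\Bigl(\sum_{i\in S} y_i^2\Bigr)^{1/2}, \]
where the second equality follows from Cauchy--Schwarz, the extremum being attained by choosing $a$ supported on the $m$ largest coordinates of $|y|$ and aligned with $y|_S$. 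Since $\{x_i^*\}$ is the canonical basis of $\mathbb{R}^N$, the vector $\sum_i g_i x_i^*$ is just $(g_1,\dots,g_N)$, and the problem reduces to bounding
\[ \mathbb{E}\max_{|S|=m}\sqrt{\sum_{i \in S} g_i^2}. \]

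By Jensen's inequality $\mathbb{E}\sqrt{Z}\leq \sqrt{\mathbb{E}Z}$, it suffices to prove
\[ \mathbb{E}\max_{|S|=m}\sum_{i\in S}g_i^2 \;\ll\; m\log\!\left(\tfrac{N}{m}+1\right). \]
For this I would use the elementary truncation inequality: for any $t>0$,
\[ \max_{|S|=m}\sum_{i\in S}g_i^2 \;\leq\; tm \,+\, \sum_{i=1}^N (g_i^2 - t)_+, \]
valid because $g_i^2 \leq t + (g_i^2 - t)_+$ pointwise for each $i$, and the right-hand positive part on the maximizing set is trivially bounded by the full sum over $i$. Taking expectations, one has $\mathbb{E}(g^2 - t)_+ = \int_0^\infty \mathbb{P}(g^2 \geq t+s)\, ds \leq 2e^{-t/2}$ from the Gaussian tail $\mathbb{P}(g^2 \geq s) \leq e^{-s/2}$.

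Choosing $t = 2\log(N/m + 1)$ then yields $N\,\mathbb{E}(g^2-t)_+ \leq 2m$ and $tm = 2m\log(N/m+1)$, both of order $m\log(N/m+1)$ (using $\log(N/m+1) \geq \log 2$ in the range $1\leq m \leq N$ to absorb the additive $O(m)$ term). Combining with Jensen gives the claim. I do not anticipate a substantive obstacle: the argument is essentially a textbook combination of duality and a Gaussian deviation estimate. The only steps requiring some care are the explicit identification of the dual norm (via the extremal-point principle applied to $\mathrm{conv}(\mathbb{S}_m)$) and the verification that the truncation estimate remains of the correct order in the regime where $m$ is comparable to $N$ and the logarithmic factor becomes $O(1)$, which is handled by the $+1$ inside the logarithm.
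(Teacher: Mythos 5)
Your proof is correct and takes a genuinely different, more elementary route than the paper. The paper observes that $\mathbb{E}\|\sum g_i x_i^*\|_{X^*} = \mathbb{E}\sup_{\mathbf a\in\mathbb{S}_m}|\sum g_ia_i|$ and then invokes Dudley's entropy-integral bound, estimating the covering numbers $\mathcal{N}(\mathbb{S}_m,\epsilon)\ll\binom{N}{m}(3/\epsilon)^m$ and integrating $\sqrt{\log\mathcal{N}}$ over $\epsilon\in(0,1]$. You instead identify the dual norm explicitly as $\|y\|_{X^*}=\max_{|S|=m}\bigl(\sum_{i\in S}y_i^2\bigr)^{1/2}$ (the $\ell^2$ mass of the $m$ largest coordinates), pass to the second moment via Jensen, and then apply the truncation inequality $\max_{|S|=m}\sum_{i\in S}g_i^2\leq tm+\sum_i(g_i^2-t)_+$ together with $\mathbb{E}(g^2-t)_+\leq 2e^{-t/2}$. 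This sidesteps the metric entropy machinery entirely. Both arguments yield the same order of magnitude; the stated inequality in the lemma should in any case be read as $\ll$ (the paper's own Dudley argument also only gives the bound up to an absolute constant, and that is all that is used downstream). One small point worth flagging explicitly: the tail estimate $\mathbb{P}(g^2\geq s)\leq e^{-s/2}$ is true for \emph{all} $s\geq 0$ with equality at $s=0$, but it is sharper than what Chernoff gives (which carries a $\sqrt{es}$ prefactor); a direct monotonicity argument verifies it, and if you prefer to avoid that verification you could use $\mathbb{P}(g^2\geq s)\leq 2e^{-s/2}$ or $e^{-s/4}$ at the cost of a slightly worse absolute constant.
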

\begin{proof}
It follows from the definition of $X^*$ that
\[\mathbb{E} \left|\left| \sum g_i x_i^*\right|\right|_{X^*} = \mathbb{E} \sup_{\mathbf{a} \in \mathbb{S}_m } \left| \sum g_i a_i \right|.   \]
(Note that taking the supremum over the convex hull of $\mathbb{S}_m$ would yield the same result.)

The latter quantity is well studied in the theory of Gaussian processes. Recall that Dudley's bound \cite{D} gives
\[ \ll \int_{0}^{\infty} \sqrt{\log \left(\mathcal{N}(\mathbb{S}_m, \epsilon) \right)} d\epsilon,  \]
where $\mathcal{N}(\mathbb{S}_m, \epsilon)$ denotes the number of $\ell^2$ balls of radius $\epsilon$ needed to cover $\mathbb{S}_m$. Now clearly $\mathbb{S}_{m}$ is a subset of the $n$-dimensional $\ell^2$ unit ball, thus $\log \left(\mathcal{N}(\mathbb{S}_m, \epsilon) \right)=0$ for $\epsilon \geq 1$, and the above quantity is equal to
\[\int_{0}^{1} \sqrt{\log \left(\mathcal{N}(\mathbb{S}_m, \epsilon) \right)} d\epsilon.\]

Lemma \ref{lem:1} now follows from the following:

\begin{lemma}For $0<\epsilon \leq 1$, we have that
\[\mathcal{N}(\mathbb{S}_m, \epsilon)  \ll  {N\choose m} \left(\frac{3}{\epsilon} \right)^m  \]
and thus
\[\log \mathcal{N}(\mathbb{S}_m, \epsilon) \ll m \log \left( \frac{N}{m}+1 \right) + m \log \left(\frac{3}{\epsilon}\right).\]
\end{lemma}

\begin{proof}We only prove the first inequality (the second follows by taking logarithms). We let $K$ denote the unit $\ell^2$ ball in $\mathbb{R}^m$. Then $\mathcal{N}(K,\epsilon K) \leq \left(\frac{3}{\epsilon} \right)^{m}$, where $\mathcal{N}(K,\epsilon K)$ denotes the number of translates of $\epsilon K$ needed to cover $K$.
To see this, consider a maximal set of disjoint balls of radius $\frac{\epsilon}{2}$ with centers in $K$. Let $T$ denote the set of their centers. By maximality, taking balls of radius $\epsilon$ around each point in $T$ yields a cover of $K$, and hence the cardinality of $T$ is an upper bound on $\mathcal{N}(K,\epsilon K)$. Now, the union of all the disjoint balls of radius $\frac{\epsilon}{2}$ with centers in $T$ is a set with volume equal to $ |T| vol(\frac{\epsilon}{2}K)$, where $|T|$ denotes the cardinality of $T$ and $vol(\frac{\epsilon}{2}K)$ denotes the volume of the ball of radius $\frac{\epsilon}{2}$. Since this set is contained in $(1+ \frac{\epsilon}{2})K$, we have
\[\mathcal{N}(K,\epsilon K) \leq \frac{vol((1+\frac{\epsilon}{2})K)}{vol(\frac{\epsilon}{2}K)} = \frac{(1+\frac{\epsilon}{2})^m}{\left(\frac{\epsilon}{2}\right)^m} = \left(1+\frac{2}{\epsilon}\right)^m \leq \left(\frac{3}{\epsilon}\right)^m\]
whenever $ 0 < \epsilon \leq 1$.

Fix $m$ coordinates and consider the associated $m$-dimensional $\ell^2$ ball. We have shown that this can be covered by $\left(\frac{3}{\epsilon} \right)^{m}$ balls of radius $\epsilon$. Summing over all ${N\choose m}$ such balls completes the proof.
\end{proof}

This completes the proof of Lemma \ref{lem:1} and hence the proof of Proposition \ref{prop:SingleM}.
\end{proof}

\subsection{Concentration of Measure on $\mathcal{O}(n)$}

In the prior section, we proved that for any $1 \leq m \leq N$ we have $\mathbb{E}_{\mathcal{O}(N)} B(m,O) \ll 1$.
It follows from Markov's inequality that for some large universal $C$, we have $\mu(\mathcal{A}(m)) > \frac{1}{2}$, where
\[ \mathcal{A}(m) := \{ O \in \mathcal{O}(N) : B(m,O) \leq C \}\]
and $\mu(\mathcal{A}(m))$ denotes the measure of the set $\mathcal{A}(m)$ in $\mathcal{O}(N)$.

Consider the Hilbert-Schmidt norm on the set of $N\times N$ matrices, $||A||_{\text{HS}} := \left( \sum_{1 \leq i, j \leq N} |A_{i,j}|^2 \right)^{1/2}$. We recall the concentration of measure inequality on the Orthogonal group (see \cite{M}):

\begin{lemma}\label{lem:Concentration} Let $\mu$ denote the Haar measure on the orthogonal group $O(N)$ and $A \subset O(N)$ such that $\mu(A) > \frac{1}{2}$. Then,
\[\mathbb{P} \left[A \in O(N) : \inf_{B \in A_{c} } ||A-B||_{\text{HS}} > \epsilon \right] \ll e^{-c \epsilon^2 N}\]
for some absolute positive constant $c$.
\end{lemma}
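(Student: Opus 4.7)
The plan is to deduce this as a standard instance of Ricci-curvature-driven concentration of measure for compact connected Lie groups. The key geometric input is that $O(N)$, equipped with the bi-invariant Riemannian metric induced by the Hilbert--Schmidt inner product $\langle X,Y\rangle = \mathrm{tr}(X^T Y)$ on $\mathfrak{so}(N)$, has Ricci curvature uniformly bounded below: $\mathrm{Ric} \geq \tfrac{N-2}{4}\, g$. The first step is to record this curvature bound, which is a direct computation since for a bi-invariant metric the sectional curvature on an orthonormal pair is $\tfrac{1}{4}\|[X,Y]\|^2$ and the Ricci tensor is the trace of this over an orthonormal basis of $\mathfrak{so}(N)$.

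Given the curvature lower bound $K := \tfrac{N-2}{4} \gg 1$ for large $N$, the second step is to invoke the Bakry--\'Emery criterion, which promotes this bound to a logarithmic Sobolev inequality with constant $2/K$ for $(O(N),\mu)$. Herbst's argument then converts the log-Sobolev inequality into sub-Gaussian concentration: for every $1$-Lipschitz function $F$ on $O(N)$ with respect to the geodesic distance $d_g$,
\[ \mu\bigl\{ O : |F(O) - \mathbb{E} F| \geq t \bigr\} \leq 2 e^{-K t^2 / 2}. \]
I would apply this to $F(O) := \inf_{B \in A} d_g(O,B)$; since $\mu(A) > 1/2$, a standard median-versus-mean manipulation forces the median of $F$ to be $0$ and yields $\mu\{F > \varepsilon\} \ll e^{-cN\varepsilon^2}$. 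Finally, on a bounded neighborhood the geodesic and Hilbert--Schmidt distances on $O(N)$ are comparable up to an absolute constant (via the elementary estimate $\|I - e^{X}\|_{\mathrm{HS}} \leq \|X\|_{\mathrm{HS}}$ for $X \in \mathfrak{so}(N)$, with a matching reverse bound when $\|X\|_{\mathrm{HS}}$ is bounded), so replacing $d_g$ by $\|\cdot\|_{\mathrm{HS}}$ only alters the constant $c$ by a harmless absolute factor and absorbs the leading $2$ into the $\ll$ notation.

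The main technical obstacle in carrying this out from first principles is the combined weight of the curvature computation and the Bakry--\'Emery/Herbst machinery: each is standard, but neither is short to reassemble, and the careful transition from the intrinsic metric to the Hilbert--Schmidt metric has to be done without losing the dimensional scaling in $K$. For this reason, in practice the cleanest course of action is to cite a reference such as \cite{M}, which packages precisely this concentration inequality for the classical compact groups; the scheme above is the conceptual skeleton underlying that statement.
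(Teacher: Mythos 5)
The paper does not prove this lemma; it simply cites Milman--Schechtman \cite{M} and moves on. So there is no internal proof to compare against, and your own final move of deferring to \cite{M} matches the paper exactly. Your conceptual skeleton --- Ricci lower bound of order $N$ for the bi-invariant metric, Bakry--\'Emery to a log-Sobolev inequality with constant $\asymp 1/N$, Herbst to sub-Gaussian concentration for Lipschitz functions, median-to-mean, then comparison of geodesic and Hilbert--Schmidt metrics --- is indeed the standard underlying mechanism, and each step is correctly identified.

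There is, however, a genuine gap in the way you have set it up: $\mathcal{O}(N)$ is disconnected (two components, $SO(N)$ and its nontrivial coset), while every ingredient you invoke is intrinsically about connected manifolds. The geodesic distance $d_g$ is $+\infty$ between points in different components, so $F(O) := \inf_{B\in A} d_g(O,B)$ need not be finite on all of $\mathcal{O}(N)$, and the claimed comparability $d_g \asymp \|\cdot\|_{\mathrm{HS}}$ on bounded neighborhoods fails across components (the two components are at Hilbert--Schmidt distance exactly $2$ but infinite geodesic distance). More to the point, the lemma as stated for $\mathcal{O}(N)$ with only $\mu(A) > \tfrac12$ is actually false: take $A = SO(N) \cup B$ with $B$ a subset of the other component of arbitrarily small Haar measure. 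Then $\mu(A) > \tfrac12$, yet for $0<\epsilon<2$ the set of $O$ at Hilbert--Schmidt distance $>\epsilon$ from $A$ is essentially the whole second component minus a small neighborhood of $B$, and so has measure close to $\tfrac12$, not $\ll e^{-c\epsilon^2 N}$. The fix is to state and prove the inequality for $SO(N)$ (which your argument does handle), or to strengthen the hypothesis so that $A$ has substantial measure in both components. This slip is also latent in the paper's own phrasing of the lemma, but it is harmless in the application because Markov's inequality in fact gives $\mu(\mathcal{A}(m))$ as close to $1$ as desired, and one can simply carry out the whole argument over $SO(N)$. You should make that restriction explicit rather than quoting the connected-group machinery for the disconnected group.
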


For any $N \times N $ matrix $M = \{m_{i,j}\}$, using the bounds from Lemma \ref{lem:Gamma1} we have
\begin{eqnarray}\label{eq:HSnorm}
\nonumber
\left|\left| \sum_{1 \leq i, n \leq N} m_{i,n} a_i \phi_n \right|\right|_{\Gamma_{*}} & \ll &  \left(\frac{N}{m} \log\left( \frac{N}{m} \right) \right)^{1/5}   \left(  \sum_{n} \left( \sum_{i} m_{i,n} a_i \right)^{2} \right)^{1/2}\\
& \ll & \left(\frac{N}{m} \log\left( \frac{N}{m} \right) \right)^{1/5}  ||M||_{HS} ||\mathbf{a}||_{\ell^2}.
\end{eqnarray}
for all $\mathbf{a} \in \mathbb{R}^N$. The final inequality follows from Cauchy-Schwartz.

Now consider $\mathcal{A}(m,\epsilon) \subset \mathcal{O}(N)$, defined to be the set of all orthogonal matrices that differ from an element of $\mathcal{A}(m)$ by a matrix with Hilbert-Schmidt norm at most $\epsilon$. Using (\ref{eq:HSnorm}), we have that for $O \in \mathcal{A}\left(m,\left( \frac{m}{N \log( \frac{N}{m}) } \right)^{1/5} \right)$ we have $B(m,O) \leq C'$, where $C'$ is a new absolute constant. On the other hand, denoting the complement of $\mathcal{A}\left(m,\left( \frac{m}{N \log( \frac{N}{m}) } \right)^{1/5}  \right)$ by $\mathcal{A}^{c}\left(m,\left( \frac{m}{N \log( \frac{N}{m}) } \right)^{1/5} \right)$, by Lemma \ref{lem:Concentration} we have
\[\mathbb{P} \left[ O \in \mathcal{A}^{c}\left(m,\left( \frac{m}{N \log( \frac{N}{m}) } \right)^{1/5}  \right) \right] \ll e^{ -cN^{2/5}} \]
for some positive constant $c$.

Now to conclude the proof of Proposition \ref{prop:main}, it suffices to find a sufficiently high probability set of elements $O \in \mathcal{O}(N)$ such that for every $1\leq m \leq N$ we have $O \in \mathcal{A}\left(m,\left( \frac{m}{N \log( \frac{N}{m}) } \right)^{1/5}\right)$. However, for sufficiently large $N$, we see from the union bound that \[\mu\left( \bigcup_{1\leq m \leq N} \mathcal{A}^{c}\left(m,\left( \frac{m}{N \log( \frac{N}{m}) } \right)^{1/5}  \right) \right) \leq N e^{ -cN^{2/5}} \ll e^{ -c_{2} N^{2/5}}.\] This completes the proof of Proposition \ref{prop:main}.

\section{Maximal Function Decomposition}

\begin{proposition}\label{prop:decomp} For $N$  fixed, let $\{\phi_n(x)\}_{n=1}^N$ be an ONS such that $\sum_{n=1}^{N}|\phi_n(x)|^2 \leq N$. There exists $Q \subset \mathcal{O}(N)$ with $\mathbb{P}[Q] \geq 1 - C(e^{-cN^{2/5}})$ such that for $O\in Q$ the associated system $\Psi(O)=\{ \psi_{n}\}_{n=1}^{N}$ satisfies the following property. For any $f=\sum a_n \psi_n$, letting $m$ denote support$(\{a_n\})$, we have that the maximal function defined by
\[\mathcal{M}f :=  \sup_{I \subseteq [N]} \left| \sum_{n\in I} a_n \psi_n \right|  \]
can be decomposed as $\mathcal{M}f := \tilde{G} + \tilde{E}$ where $|| \tilde{G} ||_{\mathcal{G}} \ll ||f||_{2}$ and $||\tilde{E}||_{2} \ll \left(\frac{m}{N}\right)^{c}||f||_{2} $ for some universal constant $c>0$.
\end{proposition}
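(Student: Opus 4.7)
The strategy is to upgrade Proposition \ref{prop:main} from a $\Gamma_*$-bound on a single partial sum to a $\Gamma_*$-bound on $\mathcal{M}f$, and then to apply the Orlicz decomposition of Lemma \ref{lem:Gamma2} exactly as one would prove Proposition \ref{prop:decomp0}, with the supremum over $I$ folded inside the Orlicz norm.

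First, I would observe that for every $I$ the truncation $f_I := \sum_{n \in I} a_n \psi_n$ has coefficient vector supported in $\text{support}(\mathbf{a})$, hence of size at most $m$. Proposition \ref{prop:main} therefore applies uniformly on the good event $Q$: $||f_I||_{\Gamma_*} \ll ||f||_2$ for all $I$. To promote these pointwise-in-$I$ bounds to $||\mathcal{M}f||_{\Gamma_*} \ll ||f||_2$, I would employ a Rademacher-Menshov style dyadic decomposition. Any admissible $I$ can be written as a union of at most $\log_2 N$ dyadic blocks, so $f_I = \sum_k T_{J_k}^{(k)}$ with each $T_J^{(k)}$ a partial sum still of support $\leq m$ and hence of bounded $\Gamma_*$-norm. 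A sub-Gaussian union bound over the $\leq N/2^k$ blocks at level $k$ (justified by the Gaussian tails that $\Gamma_*$ affords up to threshold $K_*$), combined with orthogonality $\sum_J ||T_J^{(k)}||_2^2 \leq ||f||_2^2$ and summation over $k$, produces the desired maximal Orlicz bound, up to polylogarithmic factors in $N$.

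Second, I would apply Lemma \ref{lem:Gamma2} to $\mathcal{M}f$ with $K = K_*$, producing $\mathcal{M}f = \tilde{G} + \tilde{E}$ with $||\tilde{G}||_\mathcal{G} \ll ||\mathcal{M}f||_{\Gamma_*}$ and $||\tilde{E}||_2 \ll e^{-cK_*^2}||\mathcal{M}f||_{\Gamma_*}$. Since $K_*^2 \asymp \log(N/m)$ up to iterated logs, $e^{-cK_*^2}$ is a positive power of $m/N$, and both claimed bounds follow once the polylog factors from the first step are absorbed into the error term.

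The main obstacle is this absorption: any $\log N$-type loss in $||\mathcal{M}f||_{\Gamma_*}$ would translate directly into a corresponding loss in $||\tilde{G}||_\mathcal{G}$, where the claim permits only an absolute constant. The fix is to apply Lemma \ref{lem:Gamma2} with a slightly smaller $K < K_*$ still satisfying $e^{-cK^2} \ll (m/N)^{c'}$: the exponent $2/5$ in the definition of $K_*$ leaves room for this trade, so the polylog factors get moved into the error at the cost of a slightly smaller $c$. A cleaner but more technical alternative is to rerun the operator-theoretic argument underlying Proposition \ref{prop:main} with the target Banach space $Y$ in Lemma \ref{lem:OrthDecomp} replaced by the space of functions on $\mathbb{T}$ carrying the norm $||\mathcal{M}(\cdot)||_{\Gamma_*}$, and to bound the resulting Gaussian process $\mathbb{E}||\mathcal{M}(\sum g_i \phi_i)||_{\Gamma_*}$ by a Dudley-type entropy integral in the spirit of Lemma \ref{lem:1}.
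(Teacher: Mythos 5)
Your high-level strategy diverges from the paper's in a way that introduces a genuine gap. The paper does \emph{not} prove an Orlicz bound on $\mathcal{M}f$ and then apply Lemma \ref{lem:Gamma2} once; instead it decomposes $[N]$ into a family of intervals $\{I_{k,s}\}$ chosen so that the \emph{mass} $M(I_{k,s}) = \sum_{n\in I_{k,s}}|a_n|^2$ decays like $2^{-k}$, applies Lemma \ref{lem:Gamma2} (via Proposition \ref{prop:main}) to each partial sum $S_{I_{k,s}}$ \emph{separately} to get $S_{I_{k,s}} = G_{k,s} + E_{k,s}$, and then majorizes $\mathcal{M}f$ pointwise by $\sum_k\bigl(\sum_s |G_{k,s}|^r\bigr)^{1/r} + \sum_k\bigl(\sum_s |E_{k,s}|^r\bigr)^{1/r}$ (plus the analogous terms for the admissible points), with $r=3$. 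The geometric decay $M(I_{k,s})\le 2^{-k}$, which the mass decomposition provides but a dyadic-in-position decomposition does not, is precisely what makes the resulting $\ell^r$ sums over levels converge with no polylogarithmic loss. Your Rademacher--Menshov route, working with dyadic blocks of positions, has no such decay per level, and the sum over $\sim\log N$ levels therefore contributes a $\log N$ (or $\sqrt{\log N}$) factor, as you yourself anticipate.

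The proposed ``fix'' for that loss does not work. Lemma \ref{lem:Gamma2} gives $\|\tilde G\|_{\mathcal{G}} \ll \|\mathcal{M}f\|_{\Gamma_K}$ and $\|\tilde E\|_2 \ll e^{-cK^2}\|\mathcal{M}f\|_{\Gamma_K}$ for whatever $K$ you pick; if $\|\mathcal{M}f\|_{\Gamma_K}\gg \log^{\alpha}(N)\|f\|_2$, the $\tilde G$ bound inherits that $\log^{\alpha}(N)$ no matter how small you take $K$, while the statement requires $\|\tilde G\|_{\mathcal{G}}\ll\|f\|_2$ with an absolute constant. Shrinking $K$ does not ``move the polylog into the error''; it only weakens the exponential gain $e^{-cK^2}$ in the $\tilde E$ bound. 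The only way to avoid this is to never form the Orlicz norm of $\mathcal{M}f$ in the first place, which is exactly what the paper's per-interval decomposition accomplishes. Your proposed alternative (replacing the target space $Y$ in Lemma \ref{lem:OrthDecomp} by functions carrying $\|\mathcal{M}(\cdot)\|_{\Gamma_*}$) also does not go through as stated, because $\mathcal{M}$ is defined in terms of the coefficient expansion in the random basis $\Phi(O)$, not intrinsically on the function space; the quantity $\|\mathcal{M}(T_O\mathbf{a})\|_{\Gamma_*}$ is not the $Y$-norm of $T_O\mathbf{a}$ for any fixed Banach space $Y$, so the Chevet--Marcus--Pisier framework of Lemma \ref{lem:OrthDecomp} does not apply directly. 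Finally, note that the paper's error estimate also exploits that $\|E_{k,s}\|_2\ll (|I_{k,s}|/N)^{c'}\|S_{I_{k,s}}\|_2$ with $|I_{k,s}|$ (the support inside the subinterval) typically much smaller than $m$; your single application to the whole $\mathcal{M}f$ cannot take advantage of this.
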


To prove this, we fix $Q \subset \mathcal{O}(N)$ from Proposition \ref{prop:main}. We now decompose $[N]$ into a family of subintervals according to a concept of mass defined with respect to the $a_i$ values.
We  define the \emph{mass} of a subinterval $I \subseteq [N]$ as $M(I) := \sum_{n \in I} |a_{n}|^2$. By normalization, we may assume that $M([N])=1$. We define $I_{0,1} := [N]$ and we iteratively define $I_{k,s}$, for $1\leq s\leq 2^k$, as follows. Assuming we have already defined $I_{k-1,s}$ for all $1 \leq s \leq 2^{k-1}$, we will define $I_{k,2s-1}$ and $I_{k,2s}$, which are subintervals of $I_{k-1,s}$. $I_{k,2s-1}$ begins at the left endpoint of $I_{k-1,s}$ and extends to the right as far as possible while covering strictly less than half the mass of $I_{k-1,s}$, while $I_{k,2s}$ ends at the right endpoint of $I_{k-1,s}$ and extends to the left as far as possible while covering at most half the mass of $I_{k-1,s}$. More formally,
we define $I_{k,2s-1}$ as the maximal subinterval of $I_{k-1,s}$ which contains the left endpoint of $I_{k-1,s}$ and satisfies $M(I_{k,2s-1}) < \frac{1}{2} M(I_{k-1,s})$. We also define $I_{k,2s}$ as the maximal subinterval of $I_{k-1,s}$ which contains the right endpoint of $I_{k-1,s}$ and satisfies $M(I_{k,2s}) \leq \frac{1}{2} M(I_{k-1,s})$. We note that these subintervals are disjoint. We may express $I_{k-1,s} = I_{k,2s-1} \bigcup I_{k,2s} \bigcup i_{k,s}$,  where $i_{k,s} \in I_{k-1,s}$. In other words, $i_{k,s}$ denotes the single element which lies between $I_{k,2s-1}$ and $I_{k,2s}$ (note that such a point always exists because we have required that $I_{k,2s-1}$ contains strictly less than half of the mass of the interval). Here it is acceptable, and in many instances necessary, for some choices of the intervals in this decomposition to be empty.
By construction we have that

\begin{equation}\label{eq:mass}
M(I_{k,s}) \leq 2^{-k}.
\end{equation}

We call an interval $J \subseteq [N]$ admissible if it is an element of the decomposition given above. We denote the collection of admissible intervals by $\mathcal{A}$. We additionally refer to the subset $\{I_{k,s}| 1\leq s\leq 2^k\}$ of $\mathcal{A}$ as the admissible intervals on level $k$ and the subset $\{i_{k,s} | 1 \leq s \leq 2^{k}\}$ as the admissible points on level $k$. We note that every point in $[N]$ is an admissible point on some level. (Eventually, we have subdivided all intervals down to being single elements.)

Now we write $\mathcal{I}_{k} :=\{I_{k,s} : 1\leq s\leq 2^k \}$. We decompose this as $\mathcal{I}_{k}^{a} :=\{I \in \mathcal{I}_{k} : |I| \leq 2^{-k/2}N \}$ and its complement, $\mathcal{I}_{k}^{b} :=\{I \in \mathcal{I}_{k} : |I| > 2^{-k/2}N \}$. Here, $|I|$ denotes the number of nonzero $a_i$ values contained in an interval $I$.

For $J \subseteq [N]$, we define
\[S_J(x) = \sum_{n\in J} a_n \psi_n(x).\]
We also define
\[ \tilde{S}_J(x) := \max_{I \subseteq J} \left|\sum_{n \in I} a_n \psi_n(x)\right| .\]

From Lemma \ref{lem:Gamma2} and Proposition \ref{prop:main}, we deduce that $S_J = G_J + E_J $ where $||G_J||_{\mathcal{G}} \ll ||S_J||_{2} $ and $||E_J||_{2} \ll \left(\frac{|J|}{N}\right)^{c'} ||S_J||_{2}  $ for some positive constant $c'$. Our purpose now is to show a similar decomposition for $\tilde{S}_J(x)$. Clearly, it suffices to show such a decomposition for a pointwise majorant.  Denote the decomposition of $S_{I_{k,s}}$ by  $S_{I_{k,s}}:= G_{k,s}+E_{k,s}$, and the decomposition of $S_{i_{k,s}}$ by $S_{i_{k,s}} := G_{i_{k,s}} + E_{i_{k,s}}$. Setting $r =3$, for an interval $J$ we have the following bound, where the sums below are restricted to values of $k,s$ such that $I_{k,s}, i_{k,s} \subseteq J$:
\[\tilde{S}_J (x)  \ll  \sum_{k} \left( \sum_{s} |G_{k,s} +E_{k,s}|^r\right)^{1/r} + \sum_k \left( \sum_{s} | G_{i_{k,s}} + E_{i_{k,s}} |^r   \right)^{1/r} \]
\begin{eqnarray}\label{eqn:decomp}\nonumber
  & \ll &  \left(\sum_{k} \left( \sum_{s} |G_{k,s} |^r   \right)^{1/r} +  \sum_{k} \left( \sum_{s} |G_{i_{k,s}} |^r   \right)^{1/r}\right) \\
  &+&  \left( \sum_{k} \left( \sum_{s} | E_{k,s} |^r   \right)^{1/r} + \sum_{k} \left( \sum_{s} | E_{i_{k,s}} |^r   \right)^{1/r} \right) =: \tilde{G}_{J} + \tilde{E}_J.
\end{eqnarray}
This follows from the observation that for each point $x$, the maximizing subinterval $I \subseteq J$ can be decomposed as a union of admissible intervals and points with at most two intervals and points on each level. The contribution on each level can then be bounded by a constant times the contribution from the ``worst" interval/point, which is in turn bounded by the quantity inside the sum over $k$ above for each level $k$.

For an admissible interval $J$, we let $k^*$ denote the level of $J$. We note that the sums over $k$ in (\ref{eqn:decomp}) range only over $k \geq k^*$ (and the sums over $s$ are also appropriately restricted).
Next we show that $||\tilde{G}_{J}||_{\mathcal{G}(c)} \ll ||S_J||_{2}$ for some absolute constant $c$ and $||\tilde{E}_J ||_{2} \ll \left(\frac{|J|}{N}\right)^{c'} ||S_J||_{2}$.

Now let us estimate $||\tilde{E}_{J}||_{2}$. We first estimate the contribution from the admissible points $i_{k,s} \in J$. We observe
\[\left|\left| \sum_k \left(\sum_s |E_{i_{k,s}}|^r\right)^{\frac{1}{r}} \right|\right|_2 \leq \sum_k \left|\left| \left(\sum_s |E_{i_{k,s}}|^r\right)^{\frac{1}{r}} \right|\right|_2.\]
Since $r>2$, this is
\[\leq \sum_k \left(\sum_s ||E_{i_{k,s}}||_2^2\right)^{\frac{1}{2}} \ll \left(\frac{1}{N}\right)^{c'} \sum_k \left(\sum_s ||S_{i_{k,s}}||^2_2\right)^{\frac{1}{2}},\]
where the latter inequality follows from the definition of $E_{i_{k,s}}$.

Now since these sums only range over values of $k,s$ such that $i_{k,s} \in J$, we may split the sum over $k$ into two portions as:
\begin{equation}\label{eq:points1}
\sum_k \left(\sum_s ||S_{i_{k,s}}||^2_2\right)^{\frac{1}{2}} = \sum_{k = k^*}^{k^*+ 10 \log(N)} \left(\sum_s ||S_{i_{k,s}}||^2_2\right)^{\frac{1}{2}} + \sum_{k > k^* + 10 \log(N)} \left(\sum_s ||S_{i_{k,s}}||^2_2\right)^{\frac{1}{2}}.
\end{equation}
To bound the first quantity in (\ref{eq:points1}), it suffices to observe that the inner quantity for each $k$ is at most $||S_J||_2$, and hence its contribution is $\ll \log(N) ||S_J||_2 \ll N^{\epsilon} ||S_J||_2$, for a constant $\epsilon < c'$. (Thus we will adjust the value of $c'$ for our final estimate by subtracting $\epsilon$.)

To bound the second quantity in (\ref{eq:points1}), we note that for any $i_{k,s} \in J$ with $k > k^*+10\log(N)$, we have $||S_{i_{k,s}}||_2^2 \leq N^{-10} ||S_J||^2_2$. There are at most $N$ points $i_{k,s}$ in the sum, and thus
\[\sum_{k > k^* + 10 \log(N)} \left(\sum_s ||S_{i_{k,s}}||^2_2\right)^{\frac{1}{2}} \ll N^{-4} ||S_{J}||_2.\]

To estimate the contribution from the admissible intervals, we proceed as follows.
For each $k \geq k^*$, we define $I_{k}^{a}(J)$ to be the set of admissible intervals $I$ on level $k$ contained in $J$ such that $|I| < 2^{-(k-k^*)/2}|J|$ and we let $I_{k}^{b}(J)$ denote the set of remaining admissible intervals on level $k$ contained in $J$. Note that $I_k^a(J)$ and $I_k^b(J)$ are disjoint, and their union is the set of all admissible intervals on level $k$ contained in $J$. It thus suffices to estimate
\[ \tilde{E}_{J}^{a}+\tilde{E}_{J}^b := \sum_{k\geq k^*} \left( \sum_{I_{k,s} \in I_{k}^{a}(J)} | E_{k,s} |^r   \right)^{1/r}  + \sum_{k} \left( \sum_{I_{k,s} \in I_{k}^{b}(J)} | E_{k,s} |^r   \right)^{1/r}.  \]

Now $|I_{k}^{b}(J)| \leq 2^{(k-k^*)/2}$, and we also have
\[||E_{k,s}||_{2} \ll \left(\frac{|J|}{N}\right)^{c'} ||S_{k,s}||_2 \ll \left(\frac{|J|}{N}\right)^{c'} 2^{-(k-k^*)/2} ||S_J||_2.\]
Since $r>2$, we have:
\[\left|\left| \sum_{k\geq k^*} \left( \sum_{s\in I_{k}^{b}(J)} | E_{k,s} |^r   \right)^{1/r}  \right|\right|_{2}  \leq \sum_{k\geq k^*} \left( \sum_{s\in I_{k}^{b}(J)} ||E_{k,s}||_{2}^2\right)^{1/2} \]
\[ \ll \left(\frac{|J|}{N}\right)^{c'} ||S_{J}||_{2} \sum_{j\geq 0} 2^{-j/4} \ll \left(\frac{|J|}{N}\right)^{c'} ||S_{J}||_{2}. \]

Next, we recall that $I \in I_{k}^{a}(J)$ implies $|I|\leq 2^{-(k-k^*)/2}|J|$. We have  $||S_{I_{k,s}}||_{2} \ll 2^{-(k-k^*)/2} ||S_{J}||_{2} $, thus
$||E_{k,s}||_{2} \ll \left(\frac{|J|}{N}\right)^{c'} 2^{-c'(k-k^*)/2} ||S_{I_{k,s}}||_2 \ll \left(\frac{|J|}{N}\right)^{c'} 2^{-(c'+1)(k-k^*)/2} ||S_J||_2$.

We then have
\[\left|\left| \sum_{k\geq k^*} \left( \sum_{I_{k,s} \in I_{k}^{a}(J)} | E_{k,s} |^r   \right)^{1/r}  \right|\right|_{2}  \leq \sum_{k\geq k^*} \left( \sum_{I_{k,s}\in I_{k}^{a}(J)} ||E_{k,s}||_{2}^2\right)^{1/2} \]
\[ \ll \left(\frac{|J|}{N}\right)^{c'} ||S_{J}||_{2} \sum_{k\geq k^*} 2^{k-k^*} 2^{-(c'+1)(k-k^*)} \ll \left(\frac{|J|}{N}\right)^{c'} ||S_{J}||_{2}.\]
Here we have used the fact that there are at most $2^{k-k^*}$ values of $s$ such that $I_{k,s} \subseteq J$ for each $k\geq k^*$.
We can apply this for $J = [N]$ in particular, recalling that $|J|$ denotes the number of nonzero $a_i$ values contained in $J$, which in this case is $m$.
This completes the proof that $||\tilde{E}||_{2} \ll \left(\frac{m}{N}\right)^{c'}||f||_{2}$ for some positive constant $c'$.

To show that $|| \tilde{G} ||_{\mathcal{G}(c)} \ll ||f||_{2}$ for some universal constant $c>0$, we will use the following lemma. These implications and arguments are well-known, however we include a proof for completeness.

\begin{lemma}\label{lem:tricky} Let $A$ denote a fixed, positive constant. For positive constants $c,C$, we define the following sets of measurable functions:
\[S_1(c) := \{ f:\mathbb{T}\rightarrow \mathbb{R} \text{ s.t. } || f||_{p} \leq c \sqrt{p}A \; \; \forall p \geq 2\},\]
\[S_2(c,C) := \{ f:\mathbb{T} \rightarrow \mathbb{R} \text{ s.t. } \mu(|f| \geq \lambda) \leq Ce^{-c\frac{\lambda^2}{A^2}}\; \; \forall \lambda \geq 0\},\]
\[S_3(c):= \{f:\mathbb{T}\rightarrow \mathbb{R} \text{ s.t. } ||f||_{\mathcal{G}(c)} \leq A\},\]
where $\mu(|f| \geq \lambda)$ denotes the Lebesgue measure of the subset of $x \in \mathbb{T}$ such that $|f(x)| \geq \lambda$.
Then for any $c>0$, there exist positive constants $c', C', c''$ (depending only on $c$) such that $S_1(c) \subseteq S_2(c',C')$ and $S_1(c) \subseteq S_3(c'')$. Similarly, for any $c, C>0$, there exist positive constants $c', c''$ (depending only on $c, C$) such that $S_2(c,C) \subseteq S_1(c')$ and $S_2(c,C) \subseteq S_3(c'')$. Finally, for any $c>0$, there exist positive constants $c', C', c''$ (depending only on $c$) such that $S_3(c) \subseteq S_2(c',C')$ and $S_3(c) \subseteq S_1(c'')$.
\end{lemma}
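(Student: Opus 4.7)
The plan is to establish the six required inclusions by cycling through the classical three standard characterizations of sub-Gaussianity: $L^p$-growth, Gaussian tail decay, and finite Orlicz norm. Rather than proving all six inclusions directly, I would prove a cycle, say $S_1 \subseteq S_2 \subseteq S_3 \subseteq S_1$, together with the two remaining short implications ($S_1 \subseteq S_3$ directly, and $S_2 \subseteq S_1$ directly), or more economically just prove the cycle and deduce the rest by composition. Since the lemma asks for explicit pairwise containments, I will sketch each arrow but the underlying techniques reduce to three standard tools: Markov's inequality, the layer-cake formula $\int |f|^p = \int_0^\infty p \lambda^{p-1} \mu(|f|\geq \lambda)\,d\lambda$, and the Taylor expansion of $e^{c t^2}-1$.

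First I would handle $S_1(c) \subseteq S_2(c',C')$: if $\|f\|_p \leq c\sqrt{p}\,A$ for all $p\geq 2$, then by Markov $\mu(|f|\geq \lambda)\leq \lambda^{-p}(c\sqrt{p}A)^p$. Choosing $p = \lambda^2/(e c^2 A^2)$ (and treating the case $\lambda \leq $ const separately by absorbing into $C'$) yields the Gaussian tail $C' e^{-c'\lambda^2/A^2}$. Next, for $S_2(c,C) \subseteq S_1(c')$, use the layer-cake formula: $\|f\|_p^p \leq C\int_0^\infty p\lambda^{p-1} e^{-c\lambda^2/A^2}\,d\lambda$, which by the substitution $u = c\lambda^2/A^2$ becomes a Gamma integral and produces $\|f\|_p \leq c' \sqrt{p}\,A$ by Stirling.

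For $S_2(c,C) \subseteq S_3(c'')$, choose $c'' < c$ small enough and compute
\[\int \left(e^{c''f^2/A^2}-1\right) = \int_0^\infty \mu\!\left(|f| \geq A\sqrt{(1/c'')\log(1+t)}\right)\,dt \leq C\int_0^\infty (1+t)^{-c/c''}\,dt,\]
which is finite and can be made $\leq 1$ by choosing $c''$ small enough (and scaling $A$ by an absolute constant if needed). Conversely, for $S_3(c) \subseteq S_2(c',C')$, Markov applied to $e^{cf^2/A^2}-1$ gives
\[\mu(|f|\geq \lambda) \leq \frac{\int e^{cf^2/A^2}-1}{e^{c\lambda^2/A^2}-1} \leq \frac{1}{e^{c\lambda^2/A^2}-1},\]
which is bounded by $C' e^{-c'\lambda^2/A^2}$ for appropriate constants.

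The remaining arrows $S_1 \subseteq S_3$ and $S_3 \subseteq S_1$ can then be obtained by composing the above. Alternatively, $S_3(c)\subseteq S_1(c'')$ follows directly by expanding $\int(e^{cf^2/A^2}-1) = \sum_{k\geq 1}\frac{c^k}{k! A^{2k}}\|f\|_{2k}^{2k} \leq 1$, which for each fixed even $p=2k$ gives $\|f\|_{2k}^{2k} \leq k! A^{2k}/c^k$, hence $\|f\|_{2k} \leq c''\sqrt{k}\,A$ by Stirling; interpolating to odd and non-integer $p\geq 2$ via Hölder completes $S_1$. The reverse $S_1(c)\subseteq S_3(c'')$ follows by expanding the Orlicz integrand as a series and using $\|f\|_{2k}^{2k}\leq (c\sqrt{2k}A)^{2k} = (2c^2 A^2)^k k^k$, then noting $\sum (c'')^k k^k / k! < \infty$ by Stirling provided $c''$ is small enough (again rescaling $A$ by an absolute constant to make the sum $\leq 1$). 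I expect no real obstacle: the only mildly delicate point is keeping track that all constants in the conclusions depend only on the stated input constants and not on $A$, which is automatic because $A$ appears only through the dimensionally consistent ratio $f/A$ in every estimate.
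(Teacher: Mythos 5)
Your proposal is correct and follows essentially the same route as the paper's proof. The paper proves exactly the four arrows you identify as primary ($S_1\subseteq S_2$ by Markov plus optimization over $p$, $S_2\subseteq S_1$ by the layer-cake formula and Stirling, $S_2\subseteq S_3$ by a tail estimate, $S_3\subseteq S_2$ by Markov on the exponential) and, like you, obtains the remaining two by composition. The only real divergence is the bookkeeping in $S_2(c,C)\subseteq S_3$: you use the continuous layer-cake identity $\int (e^{c''f^2/A^2}-1)=\int_0^\infty \mu\bigl(|f|\geq A\sqrt{\log(1+t)/c''}\bigr)\,dt$, whereas the paper uses the auxiliary inequality $e^{x/a}\leq 1+e^x/a$ to split off a multiplicative parameter and then a discrete shell decomposition $A^2 k\leq |f|^2<A^2(k+1)$ summed into a geometric series; your version is marginally cleaner and avoids the two-parameter $d_1d_2$ device. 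Your optional direct Taylor-series proofs of $S_1\subseteq S_3$ and $S_3\subseteq S_1$ are sound (the reduction from general $p\geq 2$ to even $p$ via monotonicity of $L^p$ norms on a probability space works), but the paper does not bother with these since composition suffices. One small point of care you handled correctly: rescaling $A$ in the Orlicz inequality is legitimate here since changing $A$ to a constant multiple is equivalent to changing the parameter $c$ in $\mathcal{G}(c)$, so no dependence on $A$ leaks into the final constants.
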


\begin{proof} Fixing $c,C$, we will determine $c'$ such that $S_2(c,C) \subseteq S_3(c')$ (for every $A$). We consider an $f \in S_2(c,C)$.
We consider $c' := d_1 d_2$ as a product of two variables $d_1, d_2$ whose values will be set later. We assume $d_1 \leq 1$. We have:
\begin{equation}\label{eq:step1}
 \int_{\mathbb{T}} e^{c'|f|^2/A^2} = \int_{\mathbb{T}}e^{d_1 d_2 |f|^2/A^2} \leq 1+ d_1 \int_{\mathbb{T}} e^{d_2 |f|^2/A^2},
\end{equation}
using the inequality $e^{x/a} \leq \frac{1}{a} e^x+1$ for all $a\geq 1$ and non-negative $x$ (this can be seen by considering the Taylor expansion of $e^x$).

Now, we observe that
\[\int_{\mathbb{T}} e^{d_2 |f|^2/A^2} \leq \sum_{k\geq 0} \int_{\mathbb{T}} e^{d_2 |f|^2/A^2}\cdot \mathbb{I}_{A^2 k\leq |f|^2 < A^2(k+1)} \leq \sum_{k\geq 0} \mu(|f|^2 \geq A^2 k) e^{d_2 (k+1)},\]
where $\mathbb{I}_{A^2 k\leq |f|^2 < A^2(k+1)}$ denotes the characteristic function of the set on which $|f|^2$ takes values between $A^2 k$ and $A^2 (k+1)$.
Since $f \in S_2(c,C)$, we have $\mu(|f|^2 \geq A^2 k) \leq Ce^{-ck}$ for all $k \geq 0$. Thus, we conclude
\[\int_{\mathbb{T}} e^{d_2 |f|^2/A^2} \leq \sum_{k\geq 0} Ce^{-ck + d_2(k+1)} = Ce^{d_2} \sum_{k\geq 0} e^{-(c-d_2)k} = \frac{Ce^c}{e^{c-d_2}-1}\]
whenever $d_2 < c$.
Setting $d_2 = c/2$, we obtain $\leq Ce^c/(e^{c/2}-1)$. Letting $d_1 = \min \left\{1,\frac{e^{c/2}-1}{Ce^c}\right\}$, we have
\[d_1 \int_{\mathbb{T}} e^{d_2 |f|^2/A^2} \leq 1,\]
and hence $\int_{\mathbb{T}} e^{c'|f|^2/A^2}-1 \leq 1$ for $c' = d_1 d_2$, showing that $f \in S_3(c')$. Note that $c' = d_1d_2$ depends only on $c$ and $C$.

Conversely, we observe that for every $c>0$, $S_3(c) \subseteq S_2(c,2)$. To see this, consider $f \in S_3(c)$. Then we have
\[ \int_{\mathbb{T}} e^{c |f|^2/A^2} -1 \leq 1 \Rightarrow \int_{\mathbb{T}} e^{c|f|^2/A^2} \leq 2.\]
Thus for any $\lambda > 0$,
\[ \mu(|f|\geq \lambda)e^{c \lambda^2/A^2} \leq \int_{\mathbb{T}} e^{c |f|^2/A^2} \leq 2.\]
It follows that $f \in S_2(c,2)$.

For any $c>0$, we will now show there exist $c',C$ such that $S_1(c) \subseteq S_2(c',C)$ (for every $A$). We consider an $f \in S_1(c)$. This means that $||f||_p ^p \leq c^p p^{\frac{p}{2}} A^p$ for all $p \geq 2$. Thus, for every $\lambda > 0$, $\mu(|f|\geq \lambda) \lambda^p \leq (cA)^p p^{\frac{p}{2}}$, which implies
\begin{equation}\label{eq:minimize}
\mu(|f|\geq \lambda) \leq \frac{(cA)^p p^{\frac{p}{2}}}{\lambda^p}.
\end{equation}
For a fixed $\lambda$, we may minimize this quantity over the choices of $p \geq 2$. In the case that $\frac{\lambda^2}{e c^2A^2} \geq 2$, we may set $p$ equal to this value, and the quantity in (\ref{eq:minimize}) then becomes:
\[ \left(\frac{cA}{\lambda}\right)^{\frac{\lambda^2}{ec^2A^2}} \left(\frac{\lambda^2}{ec^2A^2}\right)^{\frac{\lambda^2}{2ec^2A^2}} = e^{-\frac{\lambda^2}{2ec^2A^2}}.\]
Hence by setting $c' = \frac{1}{2ec^2}$, we achieve $\mu(|f|\geq \lambda) \leq e^{-c' \lambda^2/A^2}$ in these cases.

Now, when $\frac{\lambda^2}{e c^2A^2} < 2$, we note that $e^{-c' \lambda^2/A^2} \geq e^{-c' (2ec^2)} = e^{-1}$. Thus, setting $C =e$, we have
$\mu(|f|\geq \lambda) \leq 1 \leq C e^{-c' \lambda^2/A^2}$ in these cases.
Hence, in all cases we have that
\[ \mu(|f|\geq \lambda) \leq C e^{-c' \lambda^2/A^2},\]
so $f \in S_2(c',C)$.

Conversely, for any $c, C > 0$, we will show there exists $c'$ such that $S_2(c,C) \subseteq S_1(c')$ for every $A$. We consider an $f \in S_2(c,C)$. Then for every $\lambda \geq 0$, we have $\mu(|f| \geq \lambda) \leq Ce^{-c\frac{\lambda^2}{A^2}}$. We fix $p \geq 2$. We observe:
\[ ||f||_p^p = p \int_0^\infty \lambda^{p-1} \mu(|f| > \lambda) d\lambda \ll p \int_0^\infty \lambda^{p-1} e^{-c \lambda^2/A^2} d\lambda.\]
Substituting $\lambda = t^{\frac{1}{p}}$, we see this equals
\begin{equation}\label{eq:gamma}
\int_0^\infty e^{-c t^{\frac{2}{p}}/A^2} dt.
\end{equation}

We note that identity $\frac{p}{2} \Gamma\left(\frac{p}{2}\right) = \int_0^\infty e^{-s^{\frac{2}{p}}} ds$
where $\Gamma$ denotes the function $\Gamma(z) := \int_0^\infty y^{z-1}e^{-y} dy$. Setting $s = \left(\frac{c}{A^2}\right)^{\frac{p}{2}} t$, we see that the quantity in (\ref{eq:gamma}) is
\[ = c^{-\frac{p}{2}} A^p \int_0^\infty e^{-s^{\frac{2}{p}}} ds = c^{-\frac{p}{2}}A^p \left(\frac{p}{2}\right) \Gamma\left(\frac{p}{2}\right).\]
By Sterling's formula, $\Gamma \left(\frac{p}{2}\right) \ll p^{-1/2} \left(\frac{p}{2e}\right)^{\frac{p}{2}}$.
Hence,
\[||f||_p \ll A \sqrt{p} \left(p^{\frac{1}{2p}}\right) \ll A \sqrt{p},\] as required.

\end{proof}

%
%
%

Appealing to Lemma \ref{lem:tricky}, we see that we may bound the quantity $||\tilde{G}_{J}||_{\mathcal{G}(c)}$ by considering the $p$ norm. We recall that
\[\tilde{G}_{J} = \sum_{k} \left( \sum_s |G_{k,s}|^{r} \right)^{1/r} + \sum_{k} \left( \sum_s |G_{i_{k,s}}|^{r} \right)^{1/r} ,\]
where the sums are restricted to values of $k,s$ such that $I_{k,s},i_{k,s} \subseteq J$. We let $k^*$ again denote the level of $J$, so we are only summing over values $k \geq k^*$.

We have
\[  \left|\left| \sum_{k}  \left(\sum_{s} |G_{k,s}|^{r} \right)^{1/r} +  \sum_{k} \left( \sum_s |G_{i_{k,s}}|^{r} \right)^{1/r}\right|\right|_{p}  \leq \sum_k \left|\left| \left(\sum_{s} |G_{k,s}|^{r} \right)^{1/r}\right|\right|_{p} + \sum_k \left|\left| \left(\sum_{s} |G_{i_{k,s}}|^{r} \right)^{1/r}\right|\right|_{p} \]
by the triangle inequality, and this is
\[ = \sum_k \left|\left| \sum_s |G_{k,s}|^r\right|\right|^{\frac{1}{r}}_{\frac{p}{r}} + \sum_k \left|\left| \sum_s |G_{i_{k,s}}|^r\right|\right|^{\frac{1}{r}}_{\frac{p}{r}} \leq \sum_k \left(\sum_s \left|\left| |G_{k,s}|^r\right|\right|_{\frac{p}{r}} \right)^{\frac{1}{r}} + \sum_k \left(\sum_s \left|\left| |G_{i_{k,s}}|^r\right|\right|_{\frac{p}{r}} \right)^{\frac{1}{r}}\]
\[= \sum_k \left(\sum_s ||G_{k,s}||^r_p\right)^{\frac{1}{r}} + \sum_k \left(\sum_s ||G_{i_{k,s}}||^r_p\right)^{\frac{1}{r}}\]
by another application of the triangle inequality.

Now, using that $||G_{k,s}||_p \ll \sqrt{p} ||S_{I_{k,s}}||_2$ and $||G_{i_{k,s}}||_p \ll \sqrt{p} ||S_{i_{k,s}}||_2$ by Lemma \ref{lem:tricky} and $||S_{I_{k,s}}||_{2} \ll ||S_J||_2 2^{-(k-k^*)/2}$ and $||S_{i_{k,s}}||_2 \ll ||S_J||_2 2^{-(k-k^*)/2}$, we have
\[ ||\tilde{G}_{J}||_{p}  \leq \sum_{k\geq k^*} \left( \sum_{s} ||   G_{k,s} ||_{p}^{r} \right)^{1/r} + \sum_{k\geq k^*} \left( \sum_{s} ||   G_{i_{k,s}} ||_{p}^{r} \right)^{1/r}  \ll \sqrt{p} ||S_J||_2 \sum_{k\geq k^*} \left(\sum_s 2^{-r(k-k^*)/2}\right)^{\frac{1}{r}} .\]
Since the sum of $s$ ranges over at most $2^{k-k^*}$ values (recall we only include values of $s$ such that $I_{k,s} \subseteq J$) and $r > 2$, this is
\[ \ll \sqrt{p}||S_J||_2 \sum_{k\geq k^*} 2^{(k-k^*) (r^{-1}-2^{-1})} \ll \sqrt{p}||S_J||_2. \]
It thus follows from Lemma \ref{lem:tricky} that
\[ ||\tilde{G}_{J}||_{\mathcal{G}(c)} \ll ||S_{J}||_{2} \]
for some positive constant $c$. Lastly, we have that $||\tilde{G}_{J}|| \ll ||\tilde{G}_{J}||_{\mathcal{G}(c)}$ from the definition of the Orlicz norm.

\section{Proof of the Main result}

%
%
%
We are now ready to prove:

\begin{theorem}Let $\Phi := \{\phi_n(x)\}_{n=1}^{N}$ be an ONS such that $\sum_{n=1}^{N}|\phi_n(x)|^2 \leq N$. Then there exists $Q \subset \mathcal{O}(N)$ with $\mathbb{P}[Q] \geq 1 - Ce^{-cN^{2/5}}$ such that for $O\in Q$ the alternate ONS $\Phi(O)$ satisfies
\[ || \mathcal{V}^{2}f ||_{2} \ll \sqrt{\log \log (N)} ||f||_{2}.\]
\end{theorem}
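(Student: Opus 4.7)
The plan is to combine the admissible tree decomposition of Section 4 with the Gaussian/error splitting of Proposition \ref{prop:decomp}, and then to invoke a law-of-iterated-logarithm style chaining analogous to Theorem 9 of \cite{LewkoJFA} on the Gaussian piece. By Lemma \ref{chgMeas} I may assume $\sum_n|\phi_n(x)|^2\leq N$ and fix $O\in Q$ from Proposition \ref{prop:decomp}; this gives a basis $\{\psi_n\}$ in which every admissible partial sum admits the desired sub-Gaussian-plus-small-$L^2$ decomposition.

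First I would establish an admissible-tree majorant for $V^2 f$. Exactly as in the derivation of (\ref{eqn:decomp}), but with exponent $r=2$, Minkowski's inequality yields
\[ V^2 f \;\ll\; \sum_{k}\Bigl(\sum_s |S_{I_{k,s}}|^2\Bigr)^{1/2} \;+\; \sum_{k}\Bigl(\sum_s|S_{i_{k,s}}|^2\Bigr)^{1/2}. \]
Applying Lemma \ref{lem:Gamma2} to the bounds of Proposition \ref{prop:main} on each admissible piece, I write $S_{I_{k,s}}=G_{k,s}+E_{k,s}$ with $\|G_{k,s}\|_{\mathcal{G}}\ll\|S_{I_{k,s}}\|_2$ and $\|E_{k,s}\|_2\ll (|I_{k,s}|/N)^{c'}\|S_{I_{k,s}}\|_2$, and analogously at the points. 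A final triangle inequality in $\ell^{2}(s)$ splits the majorant as $V^2 f \ll \mathcal{V}_G+\mathcal{V}_E$.

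The error piece $\mathcal{V}_E$ is bounded by $\|f\|_2$ using the same short/long dichotomy already carried out in Section 4: on $\mathcal{I}_k^a$ the factor $(|I_{k,s}|/N)^{c'}\leq 2^{-c'k/2}$ furnishes geometric decay in $k$, whereas on $\mathcal{I}_k^b$ the bounds $|\mathcal{I}_k^b|\leq 2^{k/2}$ and $\|S_{I_{k,s}}\|_2\leq 2^{-k/2}$ yield the same, with an identical treatment for the point terms. Summation over $k$ then gives $\|\mathcal{V}_E\|_2\ll\|f\|_2$, which is harmless.

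The main obstacle is the Gaussian piece $\mathcal{V}_G$, where a level-by-level triangle inequality would only recover the Rademacher-Menshov rate $\log N$. To extract the sharper $\sqrt{\log\log N}$ I would mimic the argument of Theorem 9 in \cite{LewkoJFA}: since each $G_{k,s}$ satisfies the Gaussian-type tail $\mu(|G_{k,s}|>\lambda)\ll\exp(-c\lambda^2/\|S_{I_{k,s}}\|_2^2)$, a union bound over the admissible intervals at each of the $\sim\log N$ levels, combined with the corresponding integration of the Gaussian moments, should convert the level count into an iterated logarithm. The technical subtlety is that the $G_{k,s}$ are truncations of partial sums rather than themselves elements of a sub-Gaussian ONS, so the chaining in \cite{LewkoJFA} must be redone using the $\Gamma_{*}$ norms supplied by Proposition \ref{prop:main}; this is where I expect the bulk of the remaining work to lie.
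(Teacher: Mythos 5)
There is a genuine gap, and it lies exactly where you flag ``the bulk of the remaining work.'' Your proposed starting majorant
\[ V^2 f \;\ll\; \sum_{k}\Bigl(\sum_s |S_{I_{k,s}}|^2\Bigr)^{1/2} \;+\; \sum_{k}\Bigl(\sum_s|S_{i_{k,s}}|^2\Bigr)^{1/2} \]
is already too lossy to ever yield $\sqrt{\log\log N}$, regardless of how sub-Gaussian the pieces $G_{k,s}$ are. To see this, test it on independent standard Gaussians with $a_n\equiv N^{-1/2}$: at each level $k$ one has $\mathbb{E}\sum_s |S_{I_{k,s}}|^2 = \sum_s M(I_{k,s}) \le 1$, so $\|(\sum_s |S_{I_{k,s}}|^2)^{1/2}\|_2 \sim 1$, and summing over the $\sim\log N$ levels makes the $L^2$ norm of the right-hand side $\sim \log N$. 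Since the theorem asserts a bound of $\sqrt{\log\log N}$ even in the Gaussian case, no amount of sub-Gaussian tail control on the individual $G_{k,s}$ (nor any ``chaining'' applied to the pieces individually) can close the gap from a majorant whose own $L^2$ norm is $\log N$. The problem is that taking the $\ell^2(s)$-sum over \emph{all} admissible intervals at each level and then summing the levels throws away the crucial fact that, for each $x$, the maximizing $V^2$-partition picks out only $O(1)$ intervals per level.

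The missing idea, which is the actual content of the paper's proof, is to introduce the $\log\log N$ threshold \emph{before} doing any $L^2$ estimation, via the pointwise inequality
\[
|\mathcal{V}^{2}f(x)|^2 \;\ll\; \sum_{k,s} \bigl|\tilde{S}_{I_{k,s}} \,\mathbb{I}_{B(I_{k,s})}\bigr|^2 \;+\; \sum_{k,s} |S_{i_{k,s}}|^2 \;+\; \log\log(N),
\]
where $B(I_{k,s}) := \{x : |\tilde{S}_{I_{k,s}}(x)|^2 \geq C\log\log(N)\, M(I_{k,s})\}$. This follows from Lemma~\ref{lem:decomposition}: each $J$ in the optimizing partition is enclosed in $\tilde{J}_\ell\cup i_J\cup\tilde{J}_r$ with $M(\tilde J)\leq 2M(J)$, so the sum of contributions from intervals \emph{below} the threshold is $\ll \log\log N \cdot \sum_J M(\tilde J) \ll \log\log N$, and only the above-threshold (``bad'') pieces and the single points survive. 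The indicator $\mathbb{I}_{B(I_{k,s})}$ is then what makes the $\tilde G$ contribution summable: by Proposition~\ref{prop:decomp} and Lemma~\ref{lem:tricky} one has $|B_G(I_{k,s})| \ll \log(N)^{-c'C/10}$, and Cauchy--Schwarz (pairing $\|\tilde G_{I_{k,s}}\|_4^2 \ll 2^{-k}$ with $\|\mathbb{I}_{B_G(I_{k,s})}\|_4^2 \ll \log(N)^{-5}$) absorbs the $\log N$ levels. Your treatment of the error piece $\mathcal{V}_E$ via the short/long dichotomy is fine and matches the paper, and your instinct to follow Theorem~9 of \cite{LewkoJFA} points at the right template --- but that template's key move is precisely this threshold/indicator decomposition of the pointwise majorant, not a post-hoc chaining applied to a level-by-level $\ell^2$ sum, and without stating it the proof does not go through.
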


Here we use the mass decomposition (into dyadic subintervals $I_{k,s}$) stated previously. We use the following easily verified fact (see \cite{LewkoJFA}, Lemma 29):

\begin{lemma}\label{lem:decomposition} For every $J \subseteq [N]$, ($J \neq \emptyset$) there exist $\tilde{J}_\ell, \tilde{J}_r \in \mathcal{A}$ and $i_J \in [N]$ such that $\tilde{J}:= \tilde{J}_{\ell} \cup i_J \cup \tilde{J}_r$ is an interval (i.e. $J_\ell, i_J, J_\ell$ are adjacent), $J \subseteq \tilde{J}$, and $M(\tilde{J}) \leq 2M(J)$.
\end{lemma}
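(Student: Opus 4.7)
The plan is to prove Lemma \ref{lem:decomposition} by greedily walking down the dyadic mass tree of $\mathcal{A}$. Given a nonempty interval $J$, I first identify the smallest admissible interval $I^* = I_{k^*, s^*}$ containing $J$. The case $|J| = 1$ is handled separately: writing $J = \{p\}$ and noting that every element of $[N]$ is an admissible point on some level, the choice $\tilde{J}_\ell = \tilde{J}_r = \emptyset$ and $i_J = p$ gives $\tilde{J} = J$ with the mass bound trivial.

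In the main case $|J| \geq 2$, the minimality of $I^*$ rules out $J \subseteq I_{k^*+1, 2s^*-1}$ and $J \subseteq I_{k^*+1, 2s^*}$, so $J$ must contain elements both weakly to the left and weakly to the right of the separator $i_{k^*+1, s^*}$; because $J$ is an interval this forces $i_{k^*+1, s^*} \in J$, and I set $i_J := i_{k^*+1, s^*}$. This splits $J = J_\ell \cup \{i_J\} \cup J_r$, where $J_\ell := J \cap I_{k^*+1, 2s^*-1}$ is a suffix of $I_{k^*+1, 2s^*-1}$ and $J_r := J \cap I_{k^*+1, 2s^*}$ is a prefix of $I_{k^*+1, 2s^*}$. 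To build $\tilde{J}_\ell$, if $J_\ell = \emptyset$ put $\tilde{J}_\ell = \emptyset$; otherwise descend the chain of right-children of $I_{k^*+1, 2s^*-1}$ (each such right-child inherits its parent's right endpoint by construction), halting at the deepest admissible interval that still contains $J_\ell$. Build $\tilde{J}_r$ by the symmetric descent through left-children of $I_{k^*+1, 2s^*}$. By construction the three pieces $\tilde{J}_\ell$, $\{i_J\}$, $\tilde{J}_r$ are adjacent, so $\tilde{J}$ is an interval, and by construction $\tilde{J} \supseteq J$.

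The mass bound is where the defining property of the decomposition is exploited. Suppose $\tilde{J}_\ell = I_{k, t}$ and its right-child $I_{k+1, 2t}$ fails to contain $J_\ell$; since $J_\ell$ is a suffix of $\tilde{J}_\ell$ sharing its right endpoint with $I_{k+1, 2t}$, failure of containment means $J_\ell \supsetneq I_{k+1, 2t}$, whence $J_\ell \supseteq \{i_{k+1, t}\} \cup I_{k+1, 2t}$. Combining with the strict defining inequality $M(I_{k+1, 2t-1}) < \tfrac{1}{2} M(\tilde{J}_\ell)$ for the left child gives
\[ M(J_\ell) \;\geq\; |a_{i_{k+1, t}}|^2 + M(I_{k+1, 2t}) \;=\; M(\tilde{J}_\ell) - M(I_{k+1, 2t-1}) \;>\; \tfrac{1}{2} M(\tilde{J}_\ell). \]
A mirror argument for $\tilde{J}_r$, now using the (non-strict) inequality $M(I_{k+1, 2t}) \leq \tfrac{1}{2} M(\tilde{J}_r)$ applied to the failed left-child step, yields $M(\tilde{J}_r) \leq 2 M(J_r)$. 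Summing and using $|a_{i_J}|^2 \leq 2|a_{i_J}|^2$ gives $M(\tilde{J}) \leq 2 M(J_\ell) + 2|a_{i_J}|^2 + 2 M(J_r) = 2 M(J)$.

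The main obstacle is really just the verification that $J$ must contain the separator $i_{k^*+1, s^*}$, and that the greedy descent terminates correctly even when intermediate left or right children happen to be empty. Both points are straightforward consequences of $J$ being an interval and $I^*$ being the minimal admissible interval containing $J$; once they are handled, the proof is a clean bookkeeping argument driven by the strict/non-strict half-mass bounds built into the definition of the decomposition.
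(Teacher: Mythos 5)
Your argument is correct. The paper does not present a proof of this lemma (it cites \cite{LewkoJFA}, Lemma 29, and calls it ``easily verified''), so there is nothing to compare against directly, but your construction is the natural one and the bookkeeping is sound: finding the minimal admissible $I^*\supseteq J$ and observing that minimality plus the interval property forces $J$ to contain the separator of $I^*$, then greedily descending right-children on the left and left-children on the right until the next child drops $J_\ell$ (resp.\ $J_r$), which pins down $\tilde J_\ell,\tilde J_r\in\mathcal A$ sharing the correct endpoints, so $\tilde J$ is an interval containing $J$. The mass bound is exactly where the asymmetry in the defining inequalities ($M(I_{k,2s-1})<\tfrac12 M(I_{k-1,s})$ strict for the left child, $M(I_{k,2s})\le\tfrac12 M(I_{k-1,s})$ for the right child) is needed, and you invoke it correctly on both sides: at the stopping step the failed child is a proper subset of $J_\ell$ (resp.\ $J_r$), so $J_\ell$ already contains the separator plus the full right child of $\tilde J_\ell$, giving $M(J_\ell)> \tfrac12 M(\tilde J_\ell)$, and symmetrically $M(J_r)\ge\tfrac12 M(\tilde J_r)$; summing with the trivial bound $|a_{i_J}|^2\le 2|a_{i_J}|^2$ yields $M(\tilde J)\le 2M(J)$. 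The only minor point worth making explicit is that when $J_\ell$ (or $J_r$) is empty you set the corresponding $\tilde J$ piece to be the empty interval, which the paper explicitly permits as a member of $\mathcal A$, and that this case contributes $0\le 0$ to the mass inequality; you note this but it deserves one sentence. Otherwise this is a complete and correct proof of the lemma.
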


Without loss of generality, we set $||f||_{2}=1$, and we have the pointwise inequality
\[|\mathcal{V}^{2}f(x)|^2  \ll  \sum_{k,s} |\tilde{S}_{I_{k,s}} \mathbb{I}_{B(I_{k,s})}|^2  + \sum_{k,s}| S_{i_{k,s}}|^2 + \log \log(N), \]
where $B(I_{k,s}) \subseteq \mathbb{T}$ is the set such that $|\tilde{S}_{I_{k,s}}(x)|^2 \geq C \log\log(N) M(I_{k,s})$, for a fixed constant $C$ whose value will be chosen to be sufficiently large.
Appealing to Proposition \ref{prop:decomp}, for each $I_{k,s}$ we can decompose $\tilde{S}_{I_{k,s}} = \tilde{G}_{I_{k,s}} + \tilde{E}_{I_{k,s}}$.
We then define $B_{G}(I_{k,s}) \subseteq \mathbb{T}$ by  $|\tilde{G}_{I_{k,s}}(x)|^2 \geq \frac{C}{10} \log\log(N) M(I_{k,s})$ and $B_{E}(I_{k,s}) \subseteq \mathbb{T}$ by  $|\tilde{E}_{I_{k,s}}(x)|^2 \geq \frac{C}{10} \log\log(N) M(I_{k,s})$.

Clearly $\int \sum_{k,s}| S_{i_{k,s}}|^2 \leq 1$ is acceptable, so it suffices to show that
\[ \int\sum_{k,s} |\tilde{S}_{I_{k,s}} \mathbb{I}_{B(I_{k,s})}|^2 \ll 1.\]
Now appealing to the decomposition above, we have
\[ \int\sum_{k,s} |\tilde{S}_{I_{k,s}} \mathbb{I}_{B(I_{k,s})}|^2 \ll  \int\sum_{k,s} | \tilde{G}_{I_{k,s}}  \mathbb{I}_{B_{G}(I_{k,s})}|^2  +  \int\sum_{k,s}  | \tilde{E}_{I_{k,s}}  \mathbb{I}_{B_{E}(I_{k,s})} |^2   .\]

First we estimate
\[\int\sum_{k,s}  | \tilde{E}_{I_{k,s}}  \mathbb{I}_{B_{E}(I_{k,s})} |^2 \ll \int\sum_{k,s}  | \tilde{E}_{I_{k,s}} |^2. \]
Employing notation previously used above, we let $I_{k}^{a} := \{I_{k,s} \text{ s.t. } |I_{k,s}| \leq 2^{-k/2}N \}$ and $I_{k}^{b} := \{I_{k,s} \text{ s.t. } |I_{k,s}| > 2^{-k/2}N \}$. Thus $I \in I_{k}^{a}$ implies $|I| \leq 2^{-k/2}N$ and $|I_{k}^{b}|\leq 2^{k/2}$. We then have
\[\int\sum_{k,s}  | \tilde{E}_{I_{k,s}} |^2 = \int\sum_{I_{k,s} \in I_{k}^{a}}  | \tilde{E}_{I_{k,s}} |^2 + \int\sum_{ I_{k,s} \in I_{k}^{b}}  | \tilde{E}_{I_{k,s}} |^2 . \]

Using that $I \in I_{k}^{a}$ implies $|I| \leq 2^{-k/2}N$, we have  $ \int |\tilde{E}_{I_{k,s}}|^2 \ll 2^{-c' k/2}||S_{I_{k,s}} ||_{2}^2 \ll 2^{-k - c'k/2} $. Thus
\[\int\sum_{I_{k,s} \in I_k^a}  | \tilde{E}_{I_{k,s}} |^2  \ll \sum_{k} 2^{-c'k/2} \ll 1. \]
Next, using that $|I_{k}^{b}|\leq 2^{k/2}$ and $\int | \tilde{E}_{I_{k,s}} |^2 \ll 2^{-k}$, we have
\[\int\sum_{ I_{k,s} \in I_{k}^{b}}  | \tilde{E}_{I_{k,s}} |^2  \ll \sum_{k} 2^{-k/2} \ll 1. \]

Finally, we estimate
\[ \int\sum_{k,s} | \tilde{G}_{I_{k,s}}  \mathbb{I}_{B_{G}(I_{k,s})}|^2.\]
We can choose $C$ sufficiently large so that $|B_{G}(I_{k,s})| \ll \frac{1}{\log^{10}(N)} $ for all $k,s$ (here, $|B_{G}(I_{k,s})|$ denotes the Lebesgue measure). To see this, recall that $||\tilde{G}_{I_{k,s}}||_{\mathcal{G}(c)} \ll \sqrt{M(I_{k,s})}$. By Lemma \ref{lem:tricky}, there exists a constant $c'>0$ such that
\[ \mu\left( |\tilde{G}_{I_{k,s}}| \geq \lambda\right) \ll e^{-c' \lambda^2 / M(I_{k,s})}\]
for all $\lambda \geq 0$. Setting $\lambda^2 = \frac{C}{10} \log \log(N) M(I_{k,s})$, we obtain
\[ |B_G(I_{k,s})| \ll \log(N)^{-c'C/10}.\]
We can then choose $C$ sufficiently large with respect to $c'$ make this estimate $\ll \frac{1}{\log^{10}(N)}$.

Now we split the sum at $k=100 \log(N)$ so
\[  \int\sum_{k,s} | \tilde{G}_{I_{k,s}}  \mathbb{I}_{B_{G}(I_{k,s})}|^2  =  \int\sum_{\substack{k,s \\ k \geq 100\log(N) }} | \tilde{G}_{I_{k,s}}  \mathbb{I}_{B_{G}(I_{k,s})}|^2 + \int\sum_{\substack{k,s \\ k < 100\log(N) }} | \tilde{G}_{I_{k,s}}  \mathbb{I}_{B_{G}(I_{k,s})}|^2   .   \]

By the Cauchy-Schwarz inequality,
\[ \int\sum_{\substack{k,s \\ k < 100\log(N) }} | \tilde{G}_{I_{k,s}} \mathbb{I}_{B_{G}(I_{k,s})}|^2 \ll \sum_{k,s} || \tilde{G}_{I_{k,s}}||_{4}^2\;  || 1_{B_{G}(I_{k,s})}||_{4}^2.   \]
Now, by Lemma \ref{lem:tricky}, we have $|| \tilde{G}_{I_{k,s}} ||_{4}^2 \ll || S_{I_{k,s}} ||_{2}^2 \ll 2^{-k} $ and, by the previous estimate, $|| \mathbb{I}_{B_{G}(I_{k,s})}||_{4}^2 \ll  \frac{1}{\log^{5}(N)}$. Thus we have shown that
the quantity above is
\[ \ll \frac{1}{\log^{5}(N)}  \int\sum_{\substack{k,s \\ k < 100\log(N) }} || \tilde{G}_{I_{k,s}}||_{4}^2 \ll \frac{1}{\log^{4}(N)} \ll 1. \]

Lastly, let $T \subset [N]$ denote the set of indices appearing in some $I_{k,s}$ for $k \geq 100\log(N)$. Note that any index will appear in at most $N$ such intervals, and that $M(I_{k,s}) \leq N^{-100}$ if $k \geq 100\log(N) $. Thus $|a_n|^2 \ll N^{-100}$ for $n \in T$. Thus we have
\[\int\sum_{\substack{k,s \\ k \geq 100\log(N) }} | \tilde{G}_{I_{k,s}}  \mathbb{I}_{B_{G}(I_{k,s})}|^2 \ll N^2 \int \sum_{n \in T} |a_n \phi_n(x)|^2 \ll N^{-98} \int \sum_{n \in T} | \phi_n(x)|^2 \ll 1 .\]
This completes the proof.

\texttt{A. Lewko, Department of Computer Science, The University of Texas at Austin}

\textit{alewko@cs.utexas.edu}
\vspace*{0.5cm}

\texttt{M. Lewko, Department of Mathematics, The University of Texas at Austin}

\textit{mlewko@math.utexas.edu}

\end{document}